\documentclass[a4paper, 12pt]{article}
\usepackage{amsmath,amsthm, amssymb, esint}
\usepackage{graphicx}
\usepackage{graphics}
\usepackage{marginnote}
\usepackage{framed}
\usepackage{color}
\definecolor{red1}{rgb}{1.00,0.00,0.00}

\newcommand{\Addresses}{{
  \bigskip
  \footnotesize

  \textsc{Department of Mathematics, Saarland University, P.O. Box 151150,  Saar- br{\"u}cken 66041, Germany} and
  \textsc{ Faculty of Mathematics and Mechanics, St. Petersburg State University, Universitetskii pr. 28,  St. Petersburg 198504, Russia}\par\nopagebreak
  \textit{E-mail address:} \texttt{darya@math.uni-sb.de}

  \bigskip

  \textsc{Faculty of Mathematics and Mechanics, St. Petersburg State University, Universitetskii pr. 28,  St. Petersburg 198504, Russia}\par\nopagebreak
  \textit{E-mail address:} \texttt{uraltsev@pdmi.ras.ru}
}}

\theoremstyle{definition}
\theoremstyle{plain}
\newtheorem{thm}{Theorem}[section]
\newtheorem{lemma}[thm]{Lemma}
\newtheorem{cor}[thm]{Corollary}
\newtheorem{fact}[thm]{Fact}
\newtheorem{remark}[thm]{Remark}
\title{On regularity properties of solutions to the hysteresis-type problem
\thanks{This work was supported by the Russian Foundation of Basic Research (RFBR) through the grant number
14-01-00534, and by the Thematic Plan of the St. Petersburg State University, and by the St. Petersburg State University grant 6.38.670.2013.}
}
\author{D.E.\,Apushkinskaya and N.N.\,Uraltseva} 

\begin{document}
\maketitle

\begin{abstract}
We consider equations with the simplest hysteresis operator at the right-hand side. Such equations describe the so-called processes "with memory" in which various substances interact according to the hysteresis law.

We restrict our consideration on the so-called "strong solutions" belonging to the Sobolev class $W^{2,1}_q$ with sufficiently large $q$ and prove that in fact $q=\infty$. In other words, we establish the optimal regularity of solutions. Our arguments are based on quadratic growth estimates for solutions near the free boundary. 
\end{abstract}

\bibliographystyle{alpha}
\section{Introduction.}

In this paper we study the regularity properties of  solutions of the following parabolic equation:
\begin{equation} \label{main-equation}
H[u]=h[u] \quad \text{in}\quad Q=\mathcal{U}\times ]0,T].
\end{equation}
Here $H=\Delta-\partial_t$ is the heat operator, $\mathcal{U}$ is a domain in $\mathbb{R}^n$, and $h$ is a hysteresis-type operator acting from $C(\overline{Q})$ to $\left\lbrace \pm 1\right\rbrace $ which is defined as follows.

We fix two numbers $\alpha$ and $\beta$ ($\alpha <\beta$) and consider a \textbf{multivalued} function
$$
f(s)=\left\lbrace 
\begin{aligned}
-&1,  \quad \text{for}\quad s\in ]-\infty, \alpha],\\
&1,  \quad \text{for}\quad s\in [\beta, +\infty[,\\
-&1\ \text{or}\ 1,  \quad \text{for} \quad s \in ]\alpha, \beta[.
\end{aligned}
\right.
$$

For $u\in C(\overline{Q})$ we suppose that on the bottom of the cylinder $Q$ the initial values of $u$ as well as of $h[u](x,0):=f(u(x,0))$  are prescribed. 

After that for every point $z=(x,t)\in Q$ the corresponding value of $h[u](z)$ is uniquely defined in the following manner. Let us denote by $E$ a set of points
$$
E:=\left\lbrace z\in Q : u(z)\leqslant \alpha\right\rbrace \cup \left\lbrace z\in Q : u(z) \geqslant \beta\right\rbrace
\cup \left\lbrace \mathcal{U} \times \left\lbrace 0\right\rbrace \right\rbrace  .
$$
In other words, $E$ is a set where $f(u(z))$ is well-defined.

If $z\in E$ then $h[u](z)=f(u(z))$. Otherwise, for $z=(x,t)\in Q$ such that $\alpha <u(z)<\beta$ we set
\begin{equation} \label{definition-H}
h[u](x,t)=h[u](x, \hat{t} (x)).
\end{equation}
Here
$$
\hat{t} (x)=\max\left\lbrace s : (x,s)\in E;\ s \leqslant t\right\rbrace 
$$

Roughly speaking, condition (\ref{definition-H}) means that the hysteresis function $h[u](x,t)$ takes for $u(x,t)\in (\alpha, \beta)$ the same value as  "at the previous moment" (see Figure~\ref{bild-h-operator}).

\begin{figure}[htbp]
\centering
\includegraphics[width=0.85\textwidth]{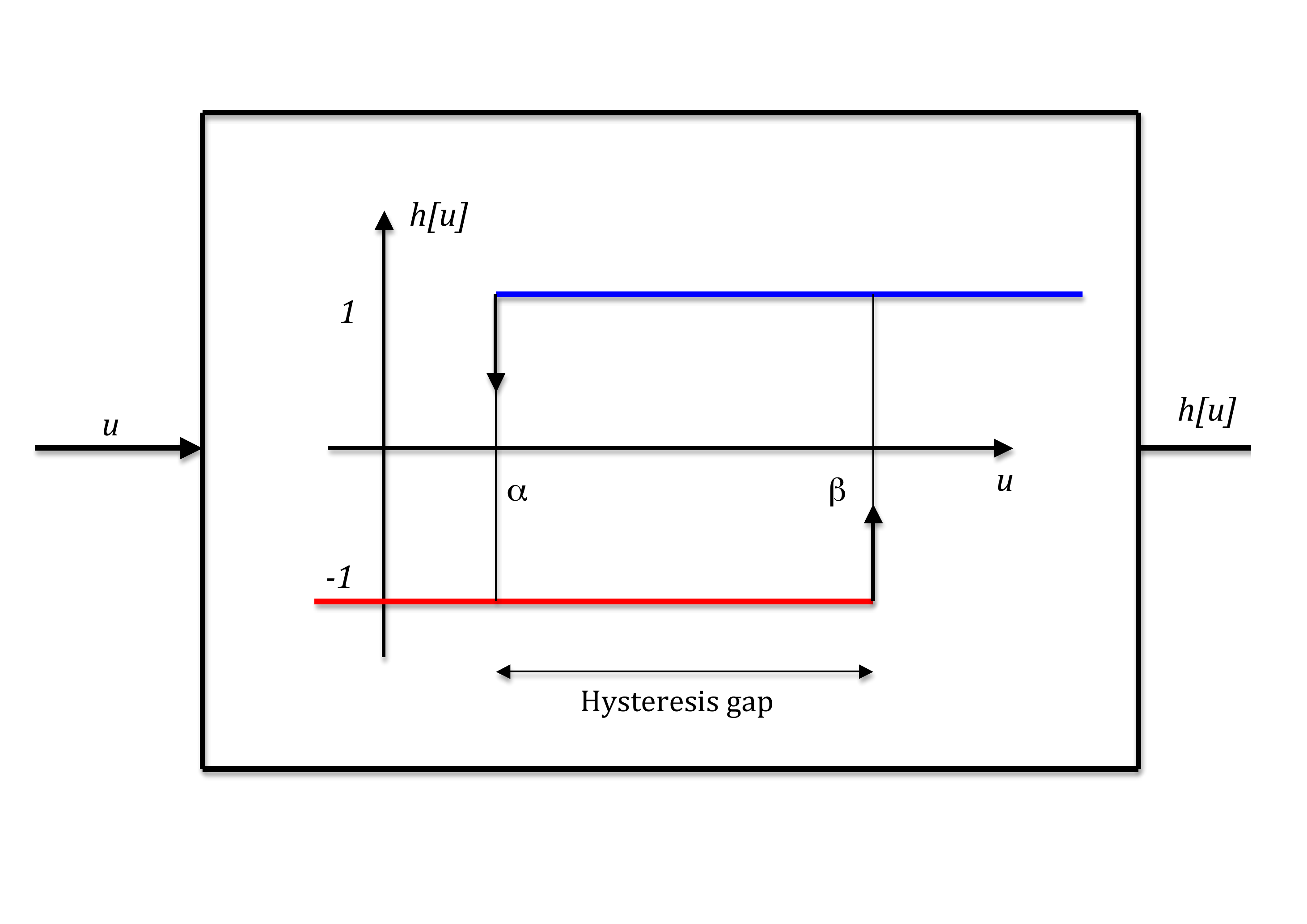}
\caption{The hysteresis operator $h$}
\label{bild-h-operator}
\end{figure}
Let us emphasize that for fixed $x$ a jump of $h[u](x,\cdot)$ can happen only on thresholds $\left\lbrace u(x,t) =\alpha\right\rbrace $ and $\left\lbrace u(x,t)=\beta\right\rbrace $. Moreover, \textbf{"jump down"} (from $h=1$ to $h=-1$)  is possble on  $\left\lbrace u(x,t)=\alpha\right\rbrace $ only, whereas \textbf{"jump up"} (from $h=-1$ to $h=1$)  is possible on $\left\lbrace u(x,t)=\beta\right\rbrace$ only. 

We say that $u$ is a (strong) solution of Eq. (\ref{main-equation}) if $u\in W^{2,1}_q(Q)$, $q>n+2$, and $u$ satisfies (\ref{main-equation}) a.e. in $Q$. In particular, it implies that the $(n+1)$-dimensional Lebesgue measure of the sets $\left\lbrace u=\alpha\right\rbrace $ and $\left\lbrace u=\beta\right\rbrace $ equals zero.

Thus, the cylinder $Q$ consists of two disjoint regions where $h[u]$ assumes the values $+1$ and $-1$, respectively. If $u$ is a solution of (\ref{main-equation}) then the interface between these two regions is apriori unknown and, therefore, may be considered as the free boundary.

\vspace{0.3cm}

Equation of type (\ref{main-equation}) arises in various biological and chemical processes in which diffusive and nondiffusive substances interact according to hysteresis law (see, for instance, \cite{HJ80}, \cite{HJP84}, \cite{K06}, and references therein). 

Difficulties in study of challenging hysteresis phenomenon include the discontinuous nonlinearity and the multivalence of  corresponding operator as well. A first attempt to create a mathematical theory of hysteresis was made in the  monograph \cite{KrP83}. We mention also the fundamental books \cite{V94}, \cite{BS96} and \cite{Kr96} where the hysteretic effects in spatial-distributed systems are described. The above-listed monographs are mainly devoted to the existence results as well as to investigations of qualitative properties of solutions.

The solvability of initial-boundary value problems for equation (\ref{main-equation}) was studied in papers \cite{Al85} and \cite{V86} in one-(space)-dimensional case and in multi-(space)-dimensional case, respectively. The global existence in a specially defined classes of weak solutions were established there. Moreover, in \cite{Al85} the nonuniqueness and nonstability of such weak solutions were discussed in several examples.
Recently, in papers 
\cite{GuShTi13} and \cite{GuTi12} the strong transversal solutions, belonging to the Sobolev space $W^{2,1}_q$ with suffiently large $q$, were studied in the one-(space)-dimensional case. This transversality property roughly speaking means that the solution has a nonvanishing spatial gradient on the free boundary. In the paper \cite{GuShTi13} the authors proved the local existence of strong transversal solutions  and showed that such solutions depend continuously on initial data. A theorem on the uniqueness of strong transversal solutions was established in \cite{GuTi12}.

\vspace{0.3cm}

In this paper we are interested in local $L^{\infty}$-estimates for the derivatives $D^2u$ and $\partial_tu$ of the strong solutions  of Eq. (\ref{main-equation}). We do not suppose that our solutions have the transversality property.
\vspace{0.2cm}

We assume that 
\begin{equation} \label{sup-estimate}
\sup\limits_{Q}|u| \leqslant M \quad \text{with}\quad  M > 1.
\end{equation}

 Since the right-hand side of (\ref{main-equation}) is bounded,  the general parabolic theory (see, e.g. \cite{LSU67}) implies for any $\epsilon>0$ the estimates
\begin{equation} \label{W^2_q-estimates}
\|\partial_t u\|_{q, Q^{\epsilon}}+\|D^2u\|_{q, Q^{\epsilon}}\leqslant N_1(\epsilon, q,M) \quad \forall q<\infty, 
\end{equation}
where $Q^{\epsilon}=\mathcal{U}^{\epsilon} \times ]\epsilon^2, T]$, $\mathcal{U}^{\epsilon} \subset \mathcal{U}$ and $\textit{dist}\, \left\lbrace \mathcal{U}^{\epsilon}, \partial\mathcal{U}\right\rbrace \geqslant \epsilon$. 
\vspace{0.2cm}

In particular, (\ref{W^2_q-estimates}) implies that functions $u$ and $Du$ are H{\"o}lder continuous in $Q$. 

We note that 
if  $\partial\mathcal{U}$ as well as the values of $u$ on the parabolic boundary of $Q$ are smooth
then  the corresponding estimates of $L^q$-norm for $\partial_t u$ and $D^2u$ are true  in the whole cylinder $Q$.

 \vspace{0.2cm}

The paper is organized as follows. In Section 2 we introduce notations used in this paper, describe the different components of the free boundary and formulate the main result of the paper: Theorem~\ref{main-thm}. In Section~3 we show the continuity of the time-derivative $\partial_t u$ across the special part of the free boundary where the spatial gradient $Du$ does not vanish, and estimate $|\partial_tu|$ on this part unformly by a constant depending only on  given quantities. Further, in Section~4 we verify that positive and negative parts of the space directional derivatives $D_eu$ for any direction $e\in \mathbb{R}^n$ are sub-caloric outside some "pathological" part of the free boundary. We use this information in Section~5 for proving the quadratic growth estimates which are crucial for the final estimates of the higher order derivatives. The uniform $L^{\infty}$-estimates of $\partial_t u$ and $D^2u$ depending on given quantities and on the distance  to the "pathological" part of the free boundary are obtained in Section~6. Finally, in Section~7 we state and prove some preliminary facts which are used intensively for proving of almost all results in the previous sections.



\section{Notation and Preliminaries.}
Throughout this article we use the following notation:

\noindent $z=(x,t)$ are points in ${\mathbb R}^{n+1}_{x,t}$, where $x\in \mathbb{R}^n$, $n \geqslant 1$, and $t\in \mathbb{R}^1$;
%

\noindent $x=(x_1, x')=(x_1,x_2,\dots, x_n)$,  if $n\geqslant 2$;

\noindent $|x|$ is the Euclidean norm of $x$;



\noindent $B_r(x^0)$ denotes the open ball in ${\mathbb R}^n$ with
center $x^0$ and radius $r$; 

\noindent $Q_r(z^0)=Q_r(x^0,t^0)=B_r(x^0) \times ]t^0-r^2,t^0+r^2[$;

\noindent $Q_r^-(z^0)=Q_r(z^0) \cap \left\lbrace t <t^0\right\rbrace$. 


\noindent When omitted, $x^0$ (or $z^0=(x^0,t^0)$, respectively) is assumed to be the origin.

\noindent $\partial 'Q_r(z^0)$ or $\partial ' Q_r^{-}(z^0)$ denote the parabolic boundary of the corresponding cylinder, i.e., the topological boundary minus the top of the cylinder.


For a cylinder $Q=\mathcal{U} \times ]0,T[$ and any $\epsilon>0$ we define the corresponding cylinder $Q^{\epsilon}$ as
$$
Q^{\epsilon}=\mathcal{U}^{\epsilon}\times ]\epsilon^2, T[,
$$
where $\mathcal{U}^{\epsilon} \subset \mathcal{U}$ and $\textit{dist}\,\left\lbrace \mathcal{U}^{\epsilon}, \partial\mathcal{U}\right\rbrace \geqslant \varepsilon$.







\noindent $u_+=\max\, \left\lbrace u,0\right\rbrace $; \qquad $u_-=\max\, \left\lbrace -u,0\right\rbrace $;

\noindent $D_i$ denotes the differential operator with respect to $x_i$;

\noindent $D=(D_1, D')=(D_1,D_2,\dots , D_n)$ denotes the spatial
gradient;

\noindent $D^2u=D(Du)$ denotes the Hessian of $u$;

\noindent $\partial_tu=\dfrac{\partial u}{\partial t}$.
\vspace{0.1cm}

\noindent $D_{\nu}$ stands for the operator of differentiation along a direction $\nu \in \mathbb{R}^n$, i.e., $|\nu|=1$ and
$
D_{\nu}u=\sum\limits_{i=1}^n \nu_iD_iu.
$

We adopt the convention that the indices $i,j,l$ always vary from $1$ to $n$.
We also adopt the convention regarding summation with respect to repeated indices.


\vspace{0.2cm} \noindent $\|\cdot\|_{p,\,\mathcal{D}}$ denotes the norm in
$L^p(\mathcal{D})$, $1<p \leqslant \infty$;

\vspace{0.2cm}
\noindent $W^{2,1}_p(\mathcal{D})$ and $W^{1,0}_p(\mathcal{D})$ are  anisotropic Sobolev spaces with the
norms
\begin{gather*}
\|u\|_{W^{2,1}_p(\mathcal{D})}=\|\partial_t u \|_{p,\,\mathcal{D}}+
\|D^2u\|_{p,\,\mathcal{D}}+\|u\|_{p,\,\mathcal{D}},\\
\|u\|_{W^{1,0}_p(\mathcal{D})}=\|Du\|_{p,\,\mathcal{D}}+\|u\|_{p,\,\mathcal{D}},
\end{gather*}
respectively. \vspace{0.2cm}

\noindent For a cylinder $\mathcal{Q}=\mathcal{U} \times ]T_1,T_2[ \subset \mathbb{R}^n_x \times\mathbb{R}^1_t$ we denote by $V_{2}(\mathcal{Q})$  the Banach space consisting of all elements of $W^{1,0}_2(\mathcal{Q})$ with a finite norm
$$
\|u\|_{V_2(\mathcal{Q})}=\sup\limits_{T_1<t< T_2}\|u\|_{2,\,\mathcal{U} }+\|Du\|_{2,\, \mathcal{Q}}.
$$


\noindent
$\fint\limits_{\mathcal{D}}\dots$ stands for the average integral over the set $\mathcal{D}$, i.e.,
$$
\fint\limits_{\mathcal{D}} \dots =\frac{1}{\text{meas}\,\left\lbrace \mathcal{D}\right\rbrace }\int\limits_{\mathcal{D}} \dots .
$$



We say that $\xi=\xi(x,t)$ is a cut-off function for a cylinder $Q_r(\hat{z})$ if 
$$
\xi(x,t)=\xi_1(x) \xi_2 (t),
$$
where $\xi_i \geqslant 0$, $i=1,2$, 
$$
\xi_1 \in C^{\infty}_{0} \left( B_r(\hat{x})\right) , \qquad \xi_1\equiv 1 \quad \text{in}\quad B_{r/2}(\hat{x}),
$$
 while $\xi_2 \in C^{\infty}([\hat{t}-r^2, \hat{t}])$, 
$\xi_2(\hat{t}-r^2)=0$ and
$\xi_2(t) \equiv 1$ for $t \geqslant \hat{t}-r^2/4$.
\vspace{0.2cm}

We define the parabolic distance $\textit{dist}_p$ from a point $z=(x,t)$ to a set $\mathcal{D} \subset\mathbb{R}^{n+1}$ by
$$
\textit{dist}_p \left( z, \mathcal{D}\right) :=\sup \left\lbrace r>0 : Q^-_r(z)\cap \mathcal{D}=\emptyset \right\rbrace .
$$






We use letters $M$, $N$, $C$ and $c$ (with or without sub-indices) to
denote various constants. To indicate that, say, $C$ depends on some
parameters, we list them in the parentheses: $C(\dots)$. We do not indicate the dependence of constants on $n$. In addition, we will write \textit{sup} instead of \textit{ess\,sup} and \textit{inf} instead of \textit{ess\,inf}.

\vspace{0.2cm}
We denote 
\begin{align*}
\Omega_{\pm}(u)&:=\left\lbrace z\in Q, \ \text{where} \ h[u](z)=\pm 1\right\rbrace ,\\
\Gamma (u)&:=\partial\Omega_+ \cap \partial\Omega_- \ \text{is the free boundary}.
\end{align*}
The latter means that $\Gamma(u)$ is the set where the function $h[u](z)$ has a jump.

\vspace{0.2cm}
We also introduce special notation for the different parts of $\Gamma (u)$
\begin{align*} 
\Gamma_{\alpha}(u)&:=\Gamma (u) \cap \left\lbrace u=\alpha\right\rbrace ,\\
\Gamma_{\beta} (u)&:=\Gamma (u) \cap \left\lbrace u=\beta\right\rbrace .
\end{align*}
By definition,
$$
\left\lbrace u \leqslant \alpha \right\rbrace \subset \Omega_{-}\quad \text{and}\quad \left\lbrace u \geqslant \beta\right\rbrace  \subset \Omega_{+}.
$$
\begin{figure}[htbp]
\centering
\includegraphics[width=0.85\textwidth]{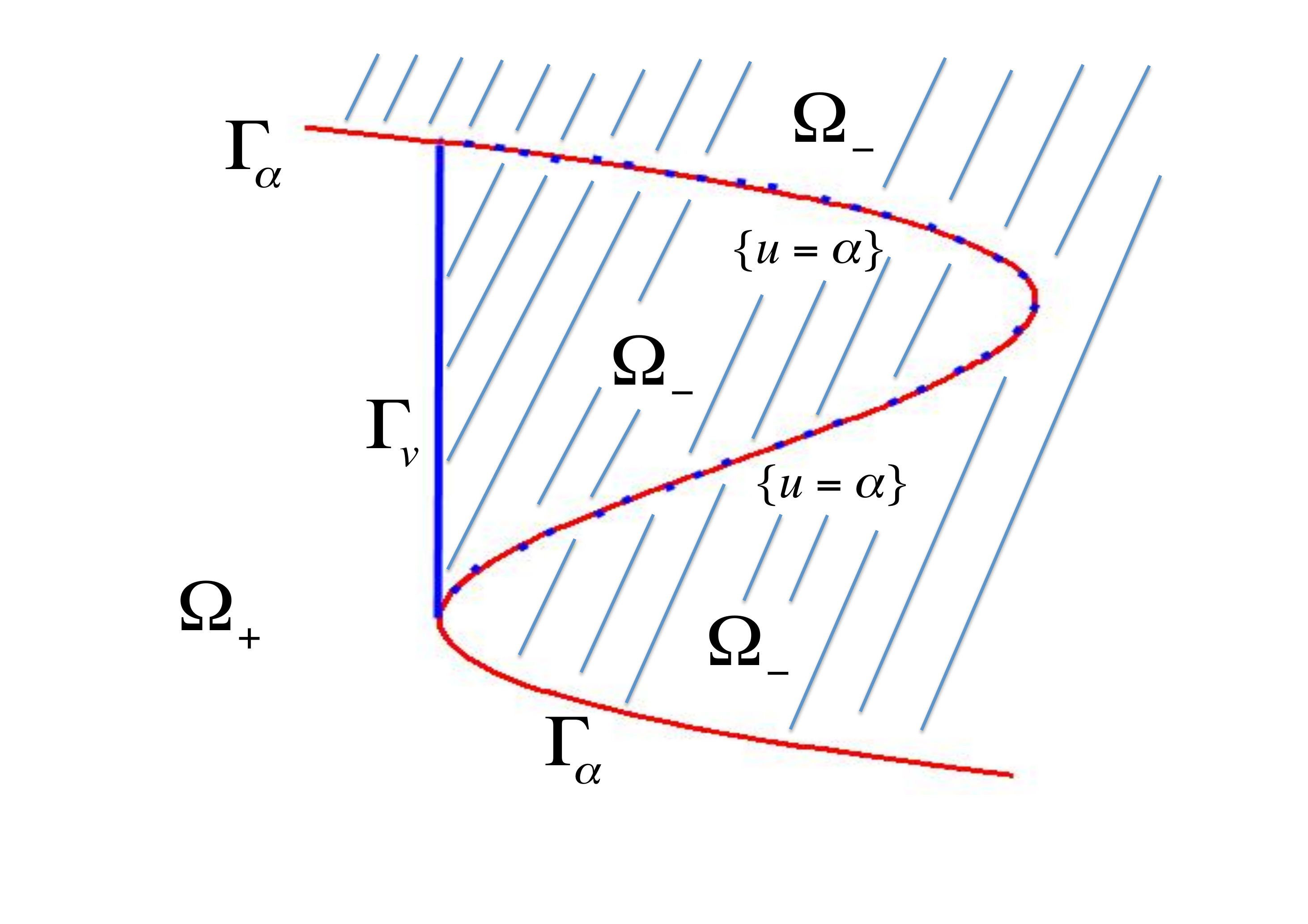}
\caption{Structure of the free boundary for $n=1$}
\label{FB-structure}
\end{figure}

It is also easy to see that the sets $\left\lbrace u=\alpha\right\rbrace $ and $\left\lbrace u=\beta\right\rbrace $ are separated from each other. 

\begin{remark} \label{choose-of-rho}
In any cylinder $Q^{\epsilon}$ the distance from the level set $\left\lbrace u=\alpha\right\rbrace $ to the level set $\left\lbrace u=\beta\right\rbrace $ is estimated from below by a positive constant depending on $M$, $\epsilon$ and $\beta -\alpha$ only.
\end{remark}

Observe that the level sets $\left\lbrace u=\alpha \right\rbrace $ and $\left\lbrace u=\beta\right\rbrace $ are not alsways the parts of the free boundary $\Gamma (u)$. Indeed, if the level set $\left\lbrace u=\alpha \right\rbrace $ is locally not a $t$-graph, then a part of $\left\lbrace u=\alpha\right\rbrace $ may occur inside $\Omega_{-}$. In this case $\Gamma (u)$ 
may contain several components of $\Gamma_{\alpha}$ connected by cylindrical surfaces with generatrixes parallel to $t$-axis (see Figure~\ref{FB-structure}). Similar statement is true for the level set $\left\lbrace u=\beta\right\rbrace$.  We will denote by $\Gamma_{v}$ the set of all points $z$ lying in such  vertical parts of $\Gamma (u)$. It should be noted that $\Gamma_v$ is, in general, not the level set $\left\lbrace u=\alpha\right\rbrace $ as well as not the level set $\left\lbrace u=\beta\right\rbrace $. This $\Gamma_v$ is just the  "pathological" part of the free boundary that we have mentioned in Introduction. Thus, we have
$$
\Gamma (u)=\Gamma_{\alpha} (u) \cup \Gamma_{\beta} (u) \cup \Gamma_{v}.
$$
\vspace{0.2cm}


We will also distinguish the following parts of $\Gamma$:
$$
\Gamma_{\alpha}^0 (u)=\Gamma_{\alpha} (u) \cap \left\lbrace |Du|=0\right\rbrace , \qquad \Gamma_{\alpha}^* (u)=\Gamma_{\alpha} (u) \setminus \Gamma_{\alpha}^0 (u).
$$
The sets $\Gamma_{\beta}^0$ and $\Gamma^*_{\beta}$ are defined analogously. In addition, we set
$$
\Gamma^0(u):=\Gamma^0_{\alpha}(u) \cup \Gamma^0_{\beta} (u), \quad
\Gamma^*(u):=\Gamma^*_{\alpha}(u) \cup \Gamma^*_{\beta} (u).
$$

\begin{remark}
It is obvious that $u\in C^{\infty}$ in the interior of the sets $\Omega_{\pm}$.
\end{remark}

\vspace{0.2cm}

Now we formulate the main result of the paper.
\begin{thm}\label{main-thm}
Let $u$ be a (strong) solution of Eq. (\ref{main-equation}), and let $z\in Q\setminus \Gamma (u)$. Then
$$
|\partial_tu(z)|+|D^2u(z)|\leqslant C(\rho_0, \varepsilon, M, \beta-\alpha ).
$$
Here $\rho_0:=\textit{dist}_p
\left\lbrace z, \Gamma_v\right\rbrace $ and $\epsilon:=\textit{dist}_p \left\lbrace z, \partial 'Q\right\rbrace $.
\end{thm}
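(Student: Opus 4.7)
The strategy is to combine the accumulated results of Sections 3--6 as follows: inside $\Omega_\pm$ the function $u$ is smooth and solves a linear heat equation with right-hand side $\pm 1$, so on any past parabolic cylinder $Q_d^-(z)$ that avoids the free boundary the classical interior estimate gives
$$
|\partial_t u(z)| + |D^2 u(z)| \;\le\; C\bigl(\|u-c\|_{L^\infty(Q_d^-(z))}/d^2+1\bigr)
$$
for any constant $c$; this estimate is then rendered scale-invariant by the quadratic growth bound of Section~5.

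Fix $z\in Q\setminus\Gamma(u)$ and set $d:=\textit{dist}_p(z,\Gamma(u))$. By definition $Q_d^-(z)$ is disjoint from $\Gamma(u)$; since $Q\setminus\Gamma(u)$ is the union of the interiors of $\Omega_+$ and $\Omega_-$, the cylinder $Q_d^-(z)$ lies entirely in one of these, say $\text{int}\,\Omega_+$, and in it $u$ is $C^\infty$ and satisfies $Hu\equiv 1$ classically. If $d\ge \rho_0/4$ the classical interior estimate applied with $c=0$ already yields
$$
|\partial_t u(z)| + |D^2 u(z)| \;\le\; C\bigl(M/d^2 + 1\bigr) \;\le\; C(M,\rho_0),
$$
and the conclusion is reached.

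Assume therefore $d<\rho_0/4$. By closedness of $\Gamma(u)$ and compactness there exists $z_0\in\Gamma(u)\cap\overline{Q_d^-(z)}$; the triangle-type inequality $\textit{dist}_p(z_0,\Gamma_v)\ge \rho_0-d\ge 3\rho_0/4$ shows that $z_0$ lies on $\Gamma^*(u)\cup\Gamma^0(u)$, so $u(z_0)=c$ for some $c\in\{\alpha,\beta\}$. Applying the quadratic growth estimate of Section~5 at $z_0$, admissible because $d\le\rho_0/4$, produces
$$
\|u-c\|_{L^\infty(Q_d^-(z))} \;\le\; C_0\,d^2, \qquad C_0=C_0(M,\beta-\alpha,\rho_0,\varepsilon).
$$
Inserting this into the interior estimate for $v:=u-c$ on $Q_d^-(z)$ gives
$$
d^2\bigl(|\partial_t u(z)| + |D^2 u(z)|\bigr) \;\le\; C\bigl(C_0\,d^2 + d^2\bigr),
$$
and cancelling $d^2$ yields the asserted uniform bound.

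The content of the theorem is therefore concentrated in the quadratic growth estimate of Section~5. On the non-degenerate stratum $\Gamma^*(u)$, where $|Du|\neq 0$, this should follow by a direct comparison argument combining the uniform bound on $\partial_t u$ across $\Gamma^*$ obtained in Section~3 with the sub-caloricity of $\pm D_e u$ established in Section~4. On the degenerate stratum $\Gamma^0(u)$, where $|Du|$ vanishes on the free boundary, the reasoning is genuinely harder: one has to run a rescaling and compactness scheme in which the sub-caloricity of $\pm D_e u$ is used to exclude superquadratic blow-up profiles. This degenerate case is, in my view, the main obstacle of the whole programme.
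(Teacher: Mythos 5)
Your reduction works in the case where the nearest free boundary point $z_0$ to $z$ lies on $\Gamma^0$, but it breaks down when $z_0\in\Gamma^*$, and this breakdown cannot be repaired. The paper's quadratic growth estimate, Lemma~\ref{quadratic-lemma}, is proved only for $z^0\in\Gamma^0$, and for a good reason: it is simply \emph{false} at a point $z_0\in\Gamma^*$. If $|Du(z_0)|=\delta_0>0$, then for small $r$ one has $\operatorname{osc}_{Q_r^-(z_0)}u\gtrsim \delta_0\,r$, i.e.\ genuinely linear (not quadratic) behaviour, and $\delta_0$ bears no a priori relation to $d=\textit{dist}_p(z,\Gamma(u))$. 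Plugging $\|u-c\|_{L^\infty(Q_d^-(z))}\approx\delta_0 d$ into your interior estimate gives $|D^2u(z)|\lesssim\delta_0/d+1$, which is unbounded as $d\to0$ at fixed $\delta_0$. Your closing remark that quadratic growth near $\Gamma^*$ ``should follow by a direct comparison argument'' asserts a statement that is not true; the blow-up proof of Lemma~\ref{quadratic-lemma} uses $Du_k(z^k)=0$ in an essential way (property~(\ref{4-6})) and cannot be run with a non-vanishing gradient at the base point.

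The paper's route in Section~6 differs precisely at this juncture. Instead of $d:=\textit{dist}_p(z,\Gamma(u))$, the authors take $d_0:=\min\{\textit{dist}_p(\hat z,\Gamma^0),\rho_0,\epsilon/2\}$ and work in a cylinder $Q_{d_0/2}^-(\hat z)$ that may well \emph{cross} $\Gamma^*$; they then handle the contribution of $\Gamma^*$ by two devices you do not use: (i) for $\partial_t u$, the test function $(\partial_t u-k)_+$ with $k\geqslant 2N_*$ vanishes in a neighbourhood of $\Gamma^*$ by Corollary~\ref{corollary-Gamma^*}, so the jump of $h[u]$ across $\Gamma^*$ contributes nothing to the energy identity (\ref{identity-6.2}); (ii) for $D^2u$, the ACF-type bound (Fact~\ref{lemma4.2-Ura2007}) applies because $(D_eu)_\pm$ are sub-caloric in all of $Q\setminus\Gamma_v$ (Lemma~\ref{sub-caloricity}), including across $\Gamma^*$. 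The quadratic growth of $u$ and the attendant linear growth of $Du$ (Lemmas~\ref{quadratic-lemma}--\ref{linear-growth-Du}) are invoked only at $\Gamma^0$, to control the right-hand sides, which is consistent with their scope. In short, the paper does not try to find a free-boundary-free cylinder and use constant-coefficient interior estimates; it instead estimates across $\Gamma^*$, and that is exactly what makes the argument go through where yours does not.
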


\begin{proof}
The proof of this statement follows from Lemmas~\ref{estimate-u_t-beyond-Gv} and \ref{estimate-for-D^2u}.
\end{proof}

\section{Estimates of $\partial_t u$ on $\Gamma^*(u)$}

\begin{lemma} \label{one-sided-estimates-u_t}
Let $u$ be a  solution of Eq. (\ref{main-equation}), and let $Q^-_{3\rho}(z^*)$ be an arbitrary cylinder contained in $Q$.  Then we have the estimates
\begin{align}
\inf\limits_{Q_{\rho}^-(z^*)}\partial_t u &\geqslant -N, \quad  \ \text{provided that} \quad  Q_{3\rho}^-(z^*) \cap\Gamma_{\beta}=\emptyset, \label{inf-u_t} \\
\sup\limits_{Q_{\rho}^-(z^*)}\partial_t u &\leqslant N, \qquad  \text{provided that}\quad Q_{3\rho}^-(z^*) \cap \Gamma_{\alpha}=\emptyset. \label{sup-u_t}
\end{align}
Here $N=N(M, \rho)$.
\end{lemma}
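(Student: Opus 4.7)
The plan is to prove the lower bound \eqref{inf-u_t}; the upper bound \eqref{sup-u_t} follows by the symmetric argument, exchanging the roles of $\alpha,\beta$ (and of $\Omega_+,\Omega_-$).

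The key structural observation is that the hypothesis $Q_{3\rho}^-(z^*)\cap\Gamma_\beta=\emptyset$ forbids any upward jump of $h[u]$ inside the cylinder. Indeed, along any vertical line $\{x\}\times\mathbb{R}$, jumps of $h[u](x,\cdot)$ can occur only at points of $\Gamma_\alpha$ (downward, from $+1$ to $-1$) or at points of $\Gamma_\beta$ (upward, from $-1$ to $+1$); the latter is excluded, so the map $t\mapsto h[u](x,t)$ is nonincreasing on the time-projection of $Q_{3\rho}^-(z^*)$.

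To convert this monotonicity into a pointwise bound on $\partial_t u$, I would introduce the backward difference quotient
$$
w_\delta(x,t):=\frac{u(x,t)-u(x,t-\delta)}{\delta}, \qquad 0<\delta\ll \rho^2,
$$
which for small $\delta$ lies in $W^{2,1}_q(Q_{2\rho}^-(z^*))$ and, by a direct computation combined with the monotonicity just established, satisfies
$$
Hw_\delta(x,t)=\frac{h[u](x,t)-h[u](x,t-\delta)}{\delta}\leqslant 0 \quad\text{a.e.\ in}\ Q_{2\rho}^-(z^*).
$$
Thus $-w_\delta$ is a strong subcaloric function; the classical local boundedness estimate (Moser iteration) applied to $(w_\delta)_-$ on the nested cylinders $Q_\rho^-(z^*)\subset Q_{2\rho}^-(z^*)$ yields
$$
\sup_{Q_\rho^-(z^*)}(w_\delta)_-\leqslant C(\rho,q)\,\|(w_\delta)_-\|_{q,Q_{2\rho}^-(z^*)}.
$$
Since $Q_{3\rho}^-(z^*)\subset Q^{3\rho}$, the a priori estimate \eqref{W^2_q-estimates} gives $\|\partial_t u\|_{q,Q_{3\rho}^-(z^*)}\leqslant N_1(3\rho,q,M)$, and the $L^q$-norms of the $w_\delta$ inherit the same bound uniformly in small $\delta$. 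Therefore $\sup_{Q_\rho^-(z^*)}(w_\delta)_-\leqslant N(M,\rho)$ uniformly, and passing along a subsequence $\delta_k\to 0$ with $w_{\delta_k}\to\partial_t u$ a.e.\ (Lebesgue differentiation in $t$) transfers this bound to $\partial_t u$, proving \eqref{inf-u_t}.

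I expect the only genuinely substantive step to be the rigorous justification of the monotonicity claim: one must extract from the hysteresis definition \eqref{definition-H} that no time interval $[t-\delta,t]$ contained in $Q_{3\rho}^-(z^*)$ can host an upward switch of $h[u]$, which is exactly where the assumption $Q_{3\rho}^-(z^*)\cap\Gamma_\beta=\emptyset$ enters. All remaining ingredients—the Moser-type bound for subcaloric functions with constant depending only on the cylinder geometry and $q$, together with the a priori $L^q$-bound \eqref{W^2_q-estimates}—are classical parabolic machinery.
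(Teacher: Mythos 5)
Your proposal is correct and follows essentially the same route as the paper's own proof: form the backward difference quotient in $t$, use the exclusion of $\Gamma_\beta$ (resp.\ $\Gamma_\alpha$) to conclude that $h[u](x,t)-h[u](x,t-\delta)$ has a definite sign so that the quotient is a one-sided subsolution of the heat equation, then combine a De Giorgi/Moser local-boundedness bound on the nested cylinders with the uniform $L^q$ control of $\partial_t u$ from (\ref{W^2_q-estimates}) and pass to the limit $\delta\to 0$. The only cosmetic difference is that the paper derives the Caccioppoli inequality for $(u^{(\tau)}-k)_\pm$ explicitly through the weak integral identities and then cites Fact~\ref{fact-4}, whereas you invoke the corresponding Moser estimate for subcaloric functions directly.
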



\begin{proof} 

Assume for the definiteness that $z^*$ lyies in a neighborhood of $\Gamma_{\beta}$. 
Consider in $Q_{2\rho }^{-}(z^*)$ the difference quotient of $u$ in the $t$-direction, i.e.,
$$
u^{(\tau)}(x,t)=\frac{u(x,t)-u(x,t-\tau)}{\tau}
$$
with some small positive $\tau$. To prove (\ref{sup-u_t}) it is sufficient to get the corresponding estimate for $u^{(\tau)}$ uniformly with respect to $\tau$. \vspace{0.2cm}


Further, we observe that equation (\ref{main-equation}) and integration by parts provide for all test-finctions $\eta \in W_2^{1,0}(Q_{2\rho}^-(z^*))$ vanishing on $\partial B_{2\rho}(x^*) \times [t^*-4\rho^2,t^*]$ the validity of the following integral identity
\begin{equation}\label{first-identity}
\int\limits_{Q_{2\rho}^-(z^*)} \left( \partial_t u \eta +DuD\eta \right) dxdt=-\int\limits_{Q_{2\rho}^-(z^*)}h[u]\eta dxdt.
\end{equation}

Using the same reasonings as in deriving of (\ref{first-identity}) we get for all test-functions 
$\widetilde{\eta}\in W_2^{1,0}\left( Q_{2\rho}^-\left( x^*, t^*+\tau\right)\right)  $ that are equal to zero on $\partial' Q_{2\rho}^-\left( x^*,t^*+\tau\right) $ the integral identity
\begin{equation} \label{shifted-identity}
\int\limits_{Q_{2\rho}^-(x^*,t^*+\tau)} \left( \partial_t u \widetilde{\eta} +Du D\widetilde{\eta}\right) dxdt=-\int\limits_{Q_{2\rho}^-(x^*,t^*+\tau)}h[u]\widetilde{\eta} dxdt.
\end{equation}
Putting in (\ref{shifted-identity}) $\widetilde{\eta}(x,t)=\eta(x, t+\tau)$ we obtain after elementary change of variables the relation
\begin{equation} \label{second-identity}
\begin{aligned}
\int\limits_{Q_{2\rho}^-(z^*)}
\left[  \partial_t u(x, t-\tau) \right. & \left.\eta(x,t)+Du(x,t-\tau)D\eta (x,t) \right]  dxdt\\
&=-\int\limits_{Q_{2\rho}^-(z^*)}h[u](x,t-\tau)\eta(x,t)dxdt.
\end{aligned}
\end{equation}

Now, 
we substract  (\ref{second-identity}) from  (\ref{first-identity}), divide the result by $\tau$ and integrate by parts. After these transformations we arrive at the equality
\begin{equation} \label{third-identity}
\begin{aligned}
\int\limits_{Q_{2\rho}^-(z^*)}\left[ \partial_t u^{(\tau)} \eta \right. &+\left. Du^{(\tau)} D\eta\right] dxdt\\
&=-\frac{1}{\tau}\int\limits_{Q_{2\rho}^-(z^*)}
\left(  h[u](x,t)-h[u](x,t-\tau)\right)  \eta dxdt.
\end{aligned}
\end{equation}

Setting in (\ref{third-identity}) 
$$
\eta (x,t)=\left( u^{(\tau)}-k\right)_+\xi^2(x,t), 
$$
where $\xi$ is a standard cut-off function for a cylinder $Q_{2\rho}^-(z^*)$ (see Notation),
we can rewrite (\ref{third-identity}) in the form
\begin{equation} \label{4-identity}
\begin{aligned}
&\int\limits_{Q_{2\rho}^-(z^*)}\left\lbrace \partial_t u^{(\tau)} \left( u^{(\tau)}-k\right)_+\xi^2  +Du^{(\tau)} D\left[ \left( u^{(\tau)}-k\right)_+\xi^2\right]\right\rbrace dxdt \\
&=-\frac{1}{\tau}\int\limits_{Q_{2\rho}^-(z^*)} \left(  h[u](x,t)-h[u](x,t-\tau)\right)   \left( u^{(\tau)}-k\right)_+\xi^2dxdt.
\end{aligned}
\end{equation}

We claim that $h[u](x,t)-h[u](x,t-\tau)\geqslant 0$ in $Q_{2\rho}^-(z^*)$. Indeed,  we have the relation
$$
Q_{2\rho}^-(z^*) \cap \Gamma_{\alpha}=\emptyset.
$$

Recall that by definition $h[u](x,t)$ may decrease in $t$ only in a neighborhood of $\Gamma_{\alpha}$. Therefore, in $Q_{2\rho}^-(z^*)$ the function $h[u]$ is either constant or increasing one.  
The latter means that for  we have instead of (\ref{4-identity}) the inequality
\begin{equation} \label{13a}
\int\limits_{Q_{2\rho}^-(z^*)}\left\lbrace  \partial_t u^{(\tau)} \left( u^{(\tau)}-k\right)_+\xi^2  +Du^{(\tau)} D\left[ \left( u^{(\tau)}-k\right)_+\xi^2\right] \right\rbrace dxdt \leqslant 0.
\end{equation}

Observe that we may take in (\ref{13a}) the cut-off fucntion  $\xi$ multiplied by the characteristic function of an interval $[t^*-4\rho^2, t]$ with an arbitrary $t\in ]t^*-4\rho^2,t^*]$ instead of $\xi$.
This leads to the inequalities
$$
\begin{gathered}
\int\limits_{t^*-4\rho^2}^{t}\int\limits_{B_{2\rho}(x^*)}\left\lbrace  \partial_t u^{(\tau)}\left( u^{(\tau)}-k\right)_+\xi^2  + Du^{(\tau)} D\left[ \left( u^{(\tau)}-k\right)_+\xi^2\right]\right\rbrace dxdt \leqslant 0, \\
\forall t\in ]t^*-4\rho^2, t^*].
\end{gathered}
$$

Further arguments are rather standard. We leave the trivially nonnegative terms in the left-hand side of the above inequalities, while the rest terms are transferred to the right-hand side and estimated from above with the help of Young's inequality. As a consequence,  we get
\begin{equation} \label{inequality-1}
\begin{aligned}
\sup\limits_{t^*-4\rho^2<t\leqslant t^*}\int\limits_{B_{2\rho}(x^*)}& (u^{(\tau)}-k)^2_+\xi^2dx \bigg|^{t}+
\int\limits_{Q_{2\rho}^-(z^*)}\left[ D\left( (u^{(\tau)}-k)_+\right)\right] ^2\xi^2 dxdt \\
&\leqslant \int\limits_{Q_{2\rho}^-(z^*)}\left( u^{(\tau)}-k\right)_+^2\left[ 4|D\xi|^2+2\xi|\partial_t \xi|\right] dxdt.
\end{aligned}
\end{equation}
With inequalities (\ref{inequality-1}) for an arbitrary $k \geqslant 0$ at hands we may apply succesively Fact~\ref{fact-4}  with $v~=~u^{(\tau)}$ and
inequalities~(\ref{W^2_q-estimates}) with $q=2$
 which immediately imply the desired estimate~(\ref{sup-u_t}). \vspace{0.2cm}

It remains only to observe that the case of  $z^*$ lying near $\Gamma_{\alpha}$ is treated almost similarly. The only differences are that we should choose in (\ref{third-identity}) 
$$\eta (x,t)=\left( u^{(\tau)}-k\right)_- \xi^2(x,t), \quad k \leqslant 0,
$$
and then check the validity of the inequality $h[u](x,t)-h[u](x,t-\tau) \leqslant 0$ in the cylinder $Q_{2\rho}^-(z^*)$.
\end{proof}
\vspace{0.2cm}

\begin{lemma} \label{lemma-3.2}
Let $u$ be a  solution of Eq. (\ref{main-equation}) and let $z^* \in \Gamma^* \setminus \Gamma_v$. 

Then $\Gamma^* \setminus \Gamma_v$ is locally a $C^1$-surface and $\partial_t u$ is a continuous function in 
a neigborhood of $z^*$. 
\end{lemma}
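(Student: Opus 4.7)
The plan is to exploit the nondegeneracy $|Du(z^*)|>0$ together with the fact that $z^*\notin\Gamma_v$ to represent the free boundary locally as a graph, and then to import standard boundary regularity for the heat equation. Without loss of generality $z^*\in\Gamma_\beta^*$, and, after a rotation of spatial coordinates, $D_1u(z^*)>0$. Since $u\in W^{2,1}_q$ with $q>n+2$, parabolic Sobolev embedding makes $Du$ continuous, so there exist $r_0>0$ and $c_0>0$ with $D_1u\geq c_0$ throughout $Q_{r_0}(z^*)$; the same continuity, together with $u(z^*)=\beta>\alpha$, ensures $Q_{r_0}(z^*)\cap\Gamma_\alpha=\emptyset$.

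First, for the $C^1$-surface claim I would apply the implicit function theorem to $u(x_1,x',t)-\beta=0$: this produces a continuous function $\psi$, continuously differentiable in $x'$ with $D_{x_j}\psi=-D_ju/D_1u$, whose graph equals $\{u=\beta\}\cap Q_{r_0}(z^*)$. Since $z^*\notin\Gamma_v$, the whole free boundary coincides locally with this level set, so $\Gamma\cap Q_{r_0}(z^*)=\Gamma_\beta^*\cap Q_{r_0}(z^*)$ is the graph of $\psi$.

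Next, to prove continuity of $\partial_t u$ across $\Gamma_\beta^*$, I would first observe that $Q_{3\rho}^-(\tilde z)\cap\Gamma_\alpha=\emptyset$ for every $\tilde z$ in a small neighborhood of $z^*$, so Lemma~\ref{one-sided-estimates-u_t} supplies a uniform upper bound $\partial_tu\leq N$ there. In the interior of $\Omega_\pm$, $u$ is $C^\infty$, so $\partial_tu$ is continuous. On each side of the graph $\Gamma_\beta^*$, $u$ solves $\Delta u-\partial_tu=\pm 1$ with Dirichlet datum $\beta$ on the graph; flattening by the change of variables $y_1=x_1-\psi(x',t)$, $y'=x'$, $s=t$ and applying boundary Schauder estimates on each resulting half-cylinder yields $u\in C^{2,\alpha}$ up to $\Gamma_\beta^*$ from each side, hence $\partial_tu$ has continuous one-sided limits on $\Gamma_\beta^*$. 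Tangential differentiation of the identity $u(\psi(x',t),x',t)\equiv\beta$ gives $\partial_tu=-D_1u\cdot\partial_t\psi$ on $\Gamma_\beta^*$; since $D_1u$ is continuous across $\Gamma_\beta^*$ and $\partial_t\psi$ is intrinsic to the surface, the two one-sided limits agree, yielding continuity in a full neighborhood of $z^*$.

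The main obstacle is the time regularity of $\psi$: the implicit function theorem provides only continuity in $t$, which is too weak for the Schauder step above. I would remedy this by a short bootstrap. Parabolic H\"older regularity of $u$ first gives $\psi\in C^{(1+\gamma)/2}_t$; combined with the one-sided bound $\partial_tu\leq N$ and $D_1u\geq c_0$, this upgrades to a one-sided Lipschitz control on $\partial_t\psi$ and, by symmetric arguments using the $C^{1,\gamma}$ regularity of $u$ and its sign on each side of the graph, to a full Lipschitz control. Boundary H\"older regularity for the heat equation on each side then promotes $u$ to $C^{1,\alpha}$ up to $\Gamma_\beta^*$, so via $\partial_t\psi=-\partial_tu/D_1u$ we obtain $\psi\in C^{1,\alpha}_t$, and a final Schauder step produces the matching continuous one-sided limits of $\partial_tu$.
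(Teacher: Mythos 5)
The paper does not flatten the free boundary by subtracting a graph function; it uses the von Mises (partial hodograph) transformation $(x_1,x',t)\mapsto(y,x',t)$ with $y=u-\beta$, introducing $v$ through $x_1=v(y,x',t)$. In the new variables the free boundary becomes the fixed hyperplane $\{y=0\}$, and $v$ solves a quasilinear uniformly parabolic equation whose right-hand side $g(y)\,\partial_1 v$ has a bounded, merely discontinuous coefficient $g$. Difference quotients $v^{(\tau)}$ in $t$ then satisfy a linear parabolic equation with H\"older coefficients, and parabolic theory gives $v^{(\tau)}\in C^{\sigma}$ uniformly in $\tau$, from which continuity of $\partial_t u$ follows. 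The crucial advantage is that no apriori regularity of $\Gamma_\beta^*$ is needed: the transformation straightens the boundary exactly.

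Your proposal takes a genuinely different route (implicit function theorem $\Rightarrow$ graph $\psi$; flattening; boundary Schauder on each side; matching of one-sided limits), and you correctly identify where it strains: you need quantitative time-regularity of $\psi$ to invoke boundary estimates, yet the relation $\partial_t\psi=-\partial_t u/D_1u$ that would supply it presupposes exactly the continuity of $\partial_t u$ you are trying to prove. Your proposed bootstrap does not close this circle. Concretely, the one-sided bound $\partial_t u\leq N$ from Lemma~\ref{one-sided-estimates-u_t} (available near $\Gamma_\beta$ because $Q_{3\rho}^-\cap\Gamma_\alpha=\emptyset$) yields only a one-sided Lipschitz control on $\psi$ in $t$. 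The ``symmetric argument'' you invoke for the other side would require a lower bound on $\partial_t u$ in a full neighborhood of $\Gamma_\beta^*$, but inequality (\ref{inf-u_t}) needs $Q_{3\rho}^-\cap\Gamma_\beta=\emptyset$, which is false here; the sign information $\partial_t u\geq 0$ on $\Gamma_\beta^*$ from Corollary~\ref{corollary-Gamma^*} is itself a consequence of this lemma and cannot be used. Without two-sided Lipschitz control on $\psi$ in $t$ the boundary Schauder (or even boundary H\"older) step is not justified, and the subsequent matching of one-sided limits is unsupported. The von Mises transformation in the paper is precisely the device that removes this chicken-and-egg obstruction, so if you want to keep the graph-flattening framework you must either find a way to obtain two-sided control on $\psi_t$ from a single one-sided bound on $\partial_t u$ (which I do not see), or replace the flattening step with the hodograph change of variables.
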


\begin{proof} Continuity of $\partial_t u$ across $\Gamma^*$ can be proved by using the same arguments as in (the proof of) Lemma~7.1 \cite{SUW09}. For the readers convenience we sketch the details.

Suppose for the definiteness that $z^*\in \Gamma^*_{\alpha} \setminus \Gamma_v$. 
Without restriction it may be assumed that $D_1u(z^*)>0$. Then, in a sufficiently small cylinder $Q_{\rho}(z^*)$ satisfying $Q_{\rho}(z^*) \cap \Gamma_v$ the function $u$ is strictly increasing in $x_1$-direction.

Further, using the von Mises transformation, we introduce the new variables
$$
(x_1,x',t)\rightarrow (y,x',t),
$$
where $y:=u(x,t)-\alpha$. We also introduce the function $v$ such that 
$$x_1=v(y,x',t).
$$
Transforming in $Q_{\rho}(z^*)$ Eq. (\ref{main-equation}) for $u$ into terms of $v$ we obtain the uniformly parabolic equation
$$
\partial_t v-a^{ij}\left(\partial v\right) \partial_i(\partial_jv)=g(y)\partial_1v,
$$
where $\partial_1v:=\dfrac{\partial v}{\partial y}=\dfrac{1}{D_1u}>0$, $\partial_m v:=\dfrac{\partial v}{\partial x_m}=D_mv=-\dfrac{D_mu}{D_1 u}$, 
\begin{equation} \label{gradient-v}
\partial v=(\partial_1v, \partial'v)=\left( \frac{1}{D_1u}, -\frac{D'u}{D_1 u}\right) ,  \qquad \partial_tv :=\frac{\partial v}{\partial t}=-\frac{\partial_t u}{D_1u},
\end{equation}
$$g(y)=\left\lbrace \begin{aligned}
& 1, \quad \text{if}\ y>0\\
 -&1, \quad \text{if}\ y<0
\end{aligned} \right. ,$$
and  the coefficients $a^{ij}$ are defined as follows
\begin{equation} \label{formula-for-a_ij}
\begin{gathered}
a^{11}(p)=\frac{1+|p'|^2}{p_1^2}, \quad a^{mm}(p)=1, \quad a^{1m}(p)=a^{m1}(p)=-\frac{p_m}{p_1},\\
a^{m\widetilde{m}}(p)=0 \quad \text{if}\quad m\neq \widetilde{m}
\end{gathered}
\end{equation}
(here the indices $m$ and $\widetilde{m}$ vary from $2$ to $n$, and $p\in \mathbb{R}^n$).
\vspace{0.2cm}

Elementary calculation shows that for the difference quotient in the $t$-direction
$$
v^{(\tau)}(y,x',t):=\frac{v(y,x',t)-v(y,x',t-\tau )}{\tau}
$$ 
we have
\begin{equation} \label{eq-for-v-tau}
\partial_t v^{(\tau)}-a^{ij}\left(\partial v\right) \partial_i(\partial_jv^{(\tau)})- b^k \partial_k v^{(\tau)}=g(y)\partial_1v^{(\tau)},
\end{equation}
where $b^k:=\dfrac{\partial a^{ij}(Z_{\tau})}{\partial p_k}\partial_i
\left( \partial_j v(y,x',t-\tau)\right)$, 
$$Z_{\tau}=\vartheta (y,x',t) \partial v(y,x',t-\tau)-\left[ 1-\vartheta (y,x',t)\right] 
\partial v(y,x',t)$$ and $\vartheta (y,x',t)\in [0,1]$.
\vspace{0.1cm}

Observe that for the second derivatives of $v$ we have the relations
\begin{equation} \label{second-derivatives-v}
\begin{gathered}
\partial_1\left( \partial_1 v\right) =-\frac{D_{11}u}{(D_1 u)^3}, \quad \partial_1\left( \partial_m v\right) =\frac{D_{11}uD_mu}{|D_1u|^2}-\frac{D_{1m}u}{D_1u},\\
\begin{aligned}
\partial_m(\partial_{\tilde{m}}v)&=\frac{D_{11}uD_muD_{\tilde{m}}u}{|D_1u|^2}\left( \frac{1}{D_1u}-2\right)\\
&+\frac{D_{1m}uD_{\tilde{m}}u}{D_1u}+\frac{D_{1\tilde{m}}uD_{m}u}{D_1u}-\frac{D_{m\tilde{m}}u}{D_1u}.
\end{aligned} 
\end{gathered}
\end{equation}
According to estimates (\ref{W^2_q-estimates}) and formulas (\ref{gradient-v})-(\ref{formula-for-a_ij}) and (\ref{second-derivatives-v}) we may conclude that in Eq.~(\ref{eq-for-v-tau}) the coefficients $a^{ij}$ are H{\"o}lder continuous functions satisfying the ellipticity condition, whereas the coefficients $b^k $  are elements of $L^q$ with an arbitrary $q<\infty$. Therefore, the parabolic theory implies that $v^{(\tau)}\in C^{\sigma}$ for some $\sigma \in (0,1)$. 
We note also that all the estimates of corresponding norms are uniformly bounded in $\tau$. Hence
we immediately conclude that $\partial_tu$ is also H{\"o}lder continuous with some exponent $\sigma'$ satisfying $0<\sigma'<\sigma$.
It is also evident that near $z^*$ the free boundary $\Gamma_{\alpha}$ is a $C^1$-surface . \smallskip


It remains only to observe that  in the case $z^*\in \Gamma_{\beta}^* \setminus \Gamma_v$ we should choose the new variable $y$ in von Mises transformation as $y:=u(x,t)-\beta$ and repeat the above steps.
\end{proof}
\vspace{0.2cm}




\begin{cor} \label{corollary-Gamma^*}
Let $u$ satisfy Eq. (\ref{main-equation}).
Then for any cylinder $Q^{\epsilon}\subset  Q$ 
we have
\begin{equation} \label{u_t-on-Gamma*}
\sup \limits_{\left( \Gamma^*\setminus \Gamma_v\right)  \cap Q^{\epsilon}}|\partial_tu| \leqslant N_*(M,\epsilon, \beta -\alpha).
\end{equation}

In addition, the mixed second derivatives $D_i\left( \partial_t u\right) $ are $L^2_{loc}$-functions in $Q\setminus\left( \Gamma^0 \cup \Gamma_v\right) $.
\end{cor}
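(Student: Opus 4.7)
The plan is to establish (\ref{u_t-on-Gamma*}) by splitting $\Gamma^* \setminus \Gamma_v$ into $\Gamma_{\alpha}^* \setminus \Gamma_v$ and $\Gamma_{\beta}^* \setminus \Gamma_v$, obtaining one side of the bound from Lemma~\ref{one-sided-estimates-u_t} combined with the continuity of $\partial_t u$ from Lemma~\ref{lemma-3.2}, and the matching side from a sign argument based directly on the definition of the hysteresis operator. The $L^2_{\mathrm{loc}}$ claim for $D_i(\partial_t u)$ will follow from the smoothness of $u$ inside $\Omega_{\pm}$ together with the parabolic regularity in the von Mises variables that already underlies Lemma~\ref{lemma-3.2}.

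Fix $z^* \in (\Gamma^* \setminus \Gamma_v) \cap Q^{\epsilon}$ and suppose first $z^* \in \Gamma_{\alpha}^*$. By Remark~\ref{choose-of-rho} there is a radius $\rho_0 = \rho_0(M, \epsilon, \beta-\alpha) > 0$ for which the parabolic distance between $\{u=\alpha\}$ and $\{u=\beta\}$ inside $Q^{\epsilon}$ is at least $\rho_0$. Setting $\rho = \rho_0/3$, the backward cylinder $Q_{3\rho}^-(z^*)$ misses $\Gamma_{\beta}$, so estimate (\ref{inf-u_t}) of Lemma~\ref{one-sided-estimates-u_t} produces $\partial_t u \geq -N_1(M, \epsilon, \beta-\alpha)$ throughout $Q_{\rho}^-(z^*)$. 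Lemma~\ref{lemma-3.2} asserts continuity of $\partial_t u$ at $z^*$, so this lower bound persists at $z^*$ itself.

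For the matching inequality $\partial_t u(z^*) \leq 0$ I use the hypothesis $z^* \notin \Gamma_v$: locally near $z^*$ the free boundary coincides with the $C^1$ level set $\{u = \alpha\}$, and since $\{u < \alpha\} \subset \Omega_-$ this identifies $\Omega_+ = \{u > \alpha\}$ locally. If instead $\partial_t u(z^*) > 0$, continuity gives $u(x^*, s) > \alpha$ on some interval $s \in (t^*, t^* + \delta)$, putting $(x^*, s) \in \Omega_+$ and thus $h[u](x^*, s) = +1$. However, $u(x^*, t^*) = \alpha$ places $(x^*, t^*)$ in the set $E$ of the hysteresis definition, while $u(x^*, \tau) \in (\alpha, \beta)$ for $\tau \in (t^*, s]$, so $\widehat{t}(x^*) = t^*$ and rule (\ref{definition-H}) forces $h[u](x^*, s) = f(u(x^*, t^*)) = f(\alpha) = -1$, contradicting the previous line. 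Hence $\partial_t u(z^*) \leq 0$ and $|\partial_t u(z^*)| \leq N_1$. The case $z^* \in \Gamma_{\beta}^*$ is handled symmetrically via (\ref{sup-u_t}) and the dual sign argument, delivering (\ref{u_t-on-Gamma*}) with $N_* := N_1$.

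For the $L^2_{\mathrm{loc}}$ assertion, inside $\Omega_{\pm}$ the function $u$ is smooth, so $D_i(\partial_t u) \in L^2_{\mathrm{loc}}$ there trivially. Near a point of $\Gamma^* \setminus \Gamma_v$ I reuse the von Mises setup of Lemma~\ref{lemma-3.2}: the difference quotient $v^{(\tau)}$ solves the uniformly parabolic equation (\ref{eq-for-v-tau}) with H\"older $a^{ij}$, $L^q$ coefficients $b^k$, and bounded source $g(y)\partial_1 v^{(\tau)}$, so the standard parabolic energy estimate supplies a $W^{1,0}_2$ bound on $v^{(\tau)}$ uniform in $\tau$; letting $\tau \to 0$ then yields $D(\partial_t v) \in L^2_{\mathrm{loc}}$. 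Translating back through $\partial_t u = -(\partial_t v)\,D_1 u$, using (\ref{W^2_q-estimates}) to control $D(D_1 u)$, produces $D_i(\partial_t u) \in L^2_{\mathrm{loc}}$ across $\Gamma^* \setminus \Gamma_v$. The main obstacle is the contradiction step in the third paragraph: both the pointwise meaning of $\partial_t u(z^*)$ and the local identification $\Omega_+ = \{u > \alpha\}$ rest on Lemma~\ref{lemma-3.2} and on the assumption $z^* \notin \Gamma_v$, which is exactly why the estimate must be restricted to $\Gamma^* \setminus \Gamma_v$.
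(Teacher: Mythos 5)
Your proof of the main estimate (\ref{u_t-on-Gamma*}) follows essentially the same route as the paper: combine the one-sided bounds of Lemma~\ref{one-sided-estimates-u_t} (using Remark~\ref{choose-of-rho} to separate $\Gamma_\alpha$ from $\Gamma_\beta$), the continuity of $\partial_t u$ from Lemma~\ref{lemma-3.2}, and a sign observation coming from the hysteresis rule. The paper derives the sign $\partial_t u \leq 0$ on $\Gamma_\alpha^* \setminus \Gamma_v$ directly from the phase structure, namely that at $\Gamma_\alpha$ the jump is from $\Omega_+$ to $\Omega_-$, so $u(x^*, t^*-\varepsilon) > \alpha = u(z^*)$; you obtain the same sign by contradiction through the definitions of $E$ and $\hat{t}(x)$. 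These are the same underlying observation packaged differently, so this part of the proof matches the paper's.

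For the $L^2_{\mathrm{loc}}$ assertion on $D_i(\partial_t u)$ you take a genuinely different route. The paper revisits the proof of Lemma~\ref{one-sided-estimates-u_t} and extracts the Caccioppoli-type inequality (\ref{13a}) for $u^{(\tau)}$ with a wider range of truncation levels $k$, from which the $L^2$ bound on $D(\partial_t u)$ follows directly, entirely in the original $(x,t)$ variables. You instead re-use the von Mises transformation of Lemma~\ref{lemma-3.2}, obtain a uniform-in-$\tau$ $W^{1,0}_2$ bound on $v^{(\tau)}$ from the transformed equation (\ref{eq-for-v-tau}), and then pull this back through $\partial_t u = -(\partial_t v)\, D_1 u$, covering the interior of $\Omega_\pm$ by the smoothness of $u$ there. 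Both routes work. The paper's argument is more economical and stays with energy estimates on $u$ itself, while yours piggybacks on the machinery already built for Lemma~\ref{lemma-3.2}; what it costs you is a chain-rule step that deserves to be made explicit, since $D_i(\partial_t u)$ involves $\partial(\partial_t v)$ composed with the von Mises map together with $D_i u$ and $D_i(D_1 u)$, and you must invoke (\ref{W^2_q-estimates}) and the lower bound on $D_1 u$ near $z^*\in\Gamma^*$ to justify that the resulting products remain in $L^2_{\mathrm{loc}}$.
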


\begin{proof}
Consider for the definiteness the case $z^*\in \left(  \Gamma_{\alpha}^* \setminus \Gamma_v \right)   \cap Q^{\epsilon}$. 
Due to Lemma~\ref{lemma-3.2} a function $\partial_tu$ is continuous in a neighborhood of $z^*$. 



Recall that by definition of $\Gamma_{\alpha}$ the function $h[u]$ has a jump in $t$-direction from $+1$ to $-1$ there. The latter means that if we cross the free boundary $\Gamma_{\alpha}^*$ in positive $t$-direction then the corresponding phases change from $\Omega_{+}$ to $\Omega_{-}$. Since $u(z^*)=\alpha$ and $u(x^*,t^*-\varepsilon)> \alpha$ for any $\varepsilon>0$ we conclude that $\partial_t u(z^*) \leqslant 0$.
Hence   the inequality
\begin{equation} \label{other-side-u_t}
\partial_t u \leqslant 0 \qquad \text{on}\quad \Gamma^*_{\alpha}\setminus \Gamma_v
\end{equation}
is valid.


Now , taking into account Remark~\ref{choose-of-rho}, one may combine\  (\ref{other-side-u_t})\  with\  one-sided\  inequality\  (\ref{inf-u_t}). It gives the desired estimate~(\ref{u_t-on-Gamma*}) with $\Gamma^*_{\alpha}$ instead of the whole $\Gamma^*$.

The other case, i.e., $z^* \in \Gamma_{\beta}^* \setminus \Gamma_v$ is treated in a similar manner. It is necessary only to observe that
if we cross the free boundary $\Gamma_{\beta}^*$ in positive $t$-direction then the phases will change from $\Omega_-$ to $\Omega_+$ and, consequently, $\partial_t u(z^*) \geqslant 0$ and the inequality
\begin{equation} \label{other-side-u_t-on-Gamma-beta}
\partial_t u \geqslant 0 \qquad \text{on}\quad \Gamma^*_{\beta}\setminus \Gamma_{v}
\end{equation}
holds true. In view of Remark~\ref{choose-of-rho}, the combination of (\ref{other-side-u_t-on-Gamma-beta}) with  one-sided estimate (\ref{sup-u_t}) finishes the proof of (\ref{u_t-on-Gamma*}).

Finally, using the same arguments as in the proof of Lemma~\ref{one-sided-estimates-u_t} we may get inequality (\ref{13a}) with sufficiently small $\rho$ and any $k \geqslant -1$ which permits us to conclude that  the mixed derivatives $D_i(\partial_t u)$ belong locally to a class of $L^2$-functions.
\end{proof}

\section{Sub-Caloricity of $D_eu$}

\begin{lemma} \label{help-sub-caloricity}
Let $w\in C(\mathcal{D})\cap W^{1,0}_{2, loc}(\mathcal{D})$ with $\mathcal{D}$ being a domain in $\mathbb{R}^{n+1}$, and let the inequality
\begin{equation} \label{inequality-for-subcaloricity}
\int\limits_{\mathcal{D}}\left( -w\partial_t\eta+Dw D\eta\right) dz \leqslant 0
\end{equation}
hold for any nonnegative function $\eta \in C^{\infty}_0(\mathcal{D})$ with $\textit{supp}\, \eta \subset \left\lbrace w>0\right\rbrace $.

Then the function $w_{+}$ is sub-caloric in $\mathcal{D}$.
\end{lemma}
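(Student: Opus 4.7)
The goal is to verify, for every nonnegative $\eta \in C^\infty_0(\mathcal{D})$, the inequality
\[
\int_{\mathcal{D}}\left(-w_+ \partial_t \eta + Dw_+ \cdot D\eta\right) dz \leqslant 0.
\]
Since $w$ is continuous, the set $\{w > 0\}$ is open; on it $w_+ = w$ and $Dw_+ = Dw$ a.e., while $w_+ \equiv 0$ and $Dw_+ = 0$ a.e.\ on its complement. Hypothesis~(\ref{inequality-for-subcaloricity}) already delivers the desired inequality whenever $\textit{supp}\,\eta \subset \{w > 0\}$, so the task is to handle a general nonnegative $\eta \in C^\infty_0(\mathcal{D})$ by localizing its support into $\{w > 0\}$ through a $w$-dependent cutoff and then passing to the limit.

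I would fix a smooth nondecreasing function $\phi_\varepsilon : \mathbb{R} \to [0,1]$ with $\phi_\varepsilon(s) = 0$ for $s \leqslant 0$ and $\phi_\varepsilon(s) = 1$ for $s \geqslant \varepsilon$, and set $H_\varepsilon(s) := \int_0^s \phi_\varepsilon(r)\,dr$, a $C^1$ approximation of $s_+$ satisfying $H_\varepsilon' = \phi_\varepsilon$. The plan is to substitute $\widetilde{\eta}_\varepsilon := \phi_\varepsilon(w)\,\eta$ into~(\ref{inequality-for-subcaloricity}). By construction $\widetilde{\eta}_\varepsilon \geqslant 0$ and has compact support inside $\textit{supp}\,\eta \cap \{w \geqslant \varepsilon\} \subset \{w > 0\}$, and a standard density argument extends~(\ref{inequality-for-subcaloricity}) to this test function. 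The chain rule in $W^{1,0}_2$ handles the spatial term, while the elementary identity $s\,\phi_\varepsilon(s) - \int_0^s r\,\phi_\varepsilon'(r)\,dr = H_\varepsilon(s)$ together with integration by parts in $t$ takes care of the time term, yielding
\[
\int Dw \cdot D\widetilde{\eta}_\varepsilon\, dz = \int \phi_\varepsilon(w)\, Dw \cdot D\eta\, dz + \int \phi_\varepsilon'(w)\,|Dw|^2\, \eta\, dz,
\]
\[
\int w\, \partial_t \widetilde{\eta}_\varepsilon\, dz = \int H_\varepsilon(w)\, \partial_t \eta\, dz.
\]
Inserting these into~(\ref{inequality-for-subcaloricity}) and rearranging gives
\[
0 \leqslant \int \phi_\varepsilon'(w)\,|Dw|^2\, \eta\, dz \leqslant \int H_\varepsilon(w)\, \partial_t \eta\, dz - \int \phi_\varepsilon(w)\, Dw \cdot D\eta\, dz.
\]

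It then remains to let $\varepsilon \to 0^+$. Since $|H_\varepsilon(w)| \leqslant w_+$ and $H_\varepsilon(w) \to w_+$ pointwise, dominated convergence gives $\int H_\varepsilon(w)\,\partial_t \eta\, dz \to \int w_+\, \partial_t \eta\, dz$; similarly, $\phi_\varepsilon(w)\,Dw \to \chi_{\{w>0\}}\,Dw = Dw_+$ pointwise a.e.\ and is dominated by $|Dw|\in L^2_{\mathrm{loc}}$, so $\int \phi_\varepsilon(w)\, Dw \cdot D\eta\, dz \to \int Dw_+ \cdot D\eta\, dz$. Combined with the nonnegativity of the left-hand side, this yields precisely the sub-caloricity of $w_+$. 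The main technical difficulty is the identity for the time term: since $w \in W^{1,0}_{2,\mathrm{loc}}(\mathcal{D})$ is not assumed to possess $\partial_t w$, the quantity $\partial_t \widetilde{\eta}_\varepsilon$ cannot be computed naively. The cleanest way to legitimize the calculation is to execute the whole argument first for the Steklov time-average $w^h$ of $w$ (which is classically differentiable in $t$ on interior subcylinders and so admits the formal chain rule and integration by parts), and then to pass $h \to 0$, using the strong $L^2_{\mathrm{loc}}$-convergence of $w^h$ and $Dw^h$ to $w$ and $Dw$.
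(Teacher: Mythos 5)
Your approach mirrors the paper's own proof closely: both localize the test function into $\left\lbrace w>0\right\rbrace$ by a cutoff applied to $w$, exploit that the resulting inequality contains the manifestly nonnegative term $\int\phi_\varepsilon'(w)\,|Dw|^2\eta\,dz$, and then pass the cutoff parameter to zero. The paper uses a piecewise-linear $\psi_\delta$ (vanishing for $s\leqslant\delta$, equal to $1$ for $s\geqslant 2\delta$) together with a space-time mollification $w_\rho$; you take a smooth $\phi_\varepsilon$ together with Steklov time-averages. These choices are interchangeable, and your algebra (the identity $s\phi_\varepsilon(s)-\int_0^s r\,\phi_\varepsilon'(r)\,dr=H_\varepsilon(s)$ and the two displayed limits) is correct.

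There is, however, one slip. You require only $\phi_\varepsilon(s)=0$ for $s\leqslant 0$, yet claim that $\widetilde\eta_\varepsilon=\phi_\varepsilon(w)\eta$ has compact support inside $\textit{supp}\,\eta\cap\left\lbrace w\geqslant\varepsilon\right\rbrace$. A smooth nondecreasing $\phi_\varepsilon$ with $\phi_\varepsilon\equiv 0$ on $(-\infty,0]$ and $\phi_\varepsilon\equiv 1$ on $[\varepsilon,\infty)$ is in general strictly positive throughout $(0,\varepsilon)$, so $\textit{supp}\,\widetilde\eta_\varepsilon$ can accumulate on $\partial\left\lbrace w>0\right\rbrace$ inside $\textit{supp}\,\eta$ and then fails to be a subset of $\left\lbrace w>0\right\rbrace$; the hypothesis~(\ref{inequality-for-subcaloricity}) does not apply to such an $\widetilde\eta_\varepsilon$. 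The fix, which the paper builds directly into $\psi_\delta$, is to require $\phi_\varepsilon\equiv 0$ on $(-\infty,\varepsilon/2]$, say. Then $\textit{supp}\,\widetilde\eta_\varepsilon\subset\left\lbrace w\geqslant\varepsilon/2\right\rbrace\cap\textit{supp}\,\eta$, a compact subset of the open set $\left\lbrace w>0\right\rbrace$, and this margin is also exactly what is needed for the small time shifts implicit in the Steklov-average step. With that modification the proof is sound and the limit $\varepsilon\to 0$ proceeds as you describe.
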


\begin{proof}
First, we take in  (\ref{inequality-for-subcaloricity})  nonnegative functions $\eta \in C^{\infty}_0(\mathcal{D})$ with 
\begin{equation} \label{w>delta}
\textit{supp}\, \eta \subset \left\lbrace w\geqslant \frac{\delta}{2} >0\right\rbrace.
\end{equation}
Without loss of generality we may consider instead of $w$ in (\ref{inequality-for-subcaloricity}) its mollifier $w_{\rho}$ with sufficiently small parameter $\rho$.  After integration by parts we arrive at
\begin{equation} \label{25}
\int\limits_{\mathcal{D}}\left[  \partial_t w_{\rho}\eta+Dw_{\rho} D\eta  \right] dz \leqslant 0.
\end{equation}
We set in (\ref{25}) $\eta=\psi_{\delta}(w_{\rho}) \varphi$, where $\varphi \in C^{\infty}_0 (\mathcal{D})$ is an arbitrary nonnegative test function, while 
$$
\psi_{\delta}(s)=\left\lbrace \begin{array}{cl}
0, & \text{if} \ s \leqslant\delta \\
\dfrac{(s-\delta)}{\delta}, & \text{if} \ \delta <s <2\delta \\
1, & \text{if} \ s \geqslant 2\delta
\end{array}.
\right.
$$
Observe that such a choice of $\eta$ is not restrictive, since due to definition of $\psi_{\delta}$ we have for sufficiently small  $\rho$ the evident inclusions
$$
\textit{supp}\, \eta \subset \left\lbrace w_{\rho} \geqslant \delta\right\rbrace  \subset \left\lbrace w>\frac{\delta}{2}\right\rbrace .
$$

After substitution of $\eta$ inequality (\ref{25}) takes the form
\begin{equation} \label{26}
\int\limits_{\mathcal{D}}\left[ \partial_t w_{\rho} \psi_{\delta}(w_{\rho})\varphi
+|Dw_{\rho}|^2\psi_{\delta}'(w_{\rho})\varphi+Dw_{\rho} \psi_{\delta}(w_{\rho})D\varphi\right] dz \leqslant 0. 
\end{equation}
Elementary calculation shows that $\partial_t w_{\rho}  \psi_{\delta}(w_{\rho})=\frac{d}{dt}F_{\delta}(w_{\rho})$ where the function 
$F_{\delta}$ is defined as
$$
F_{\delta}(s)=\int\limits_0^s \psi_{\delta}(\tau)d\tau=\left\lbrace 
\begin{array}{cl}
0, & \text{if} \ s \leqslant\delta \\
\dfrac{(s-\delta)^2}{2\delta}, & \text{if} \ \delta <s <2\delta \\
s-(3/2)\delta, & \text{if} \ s \geqslant 2\delta
\end{array}.
\right.
$$
So, again integrating by parts and taking into account that the second term in (\ref{26}) is nonnegative we get the inequality
\begin{equation} \label{27}
\int\limits_{\mathcal{D}}\left[  -F_{\delta}(w_{\rho})\partial_t\varphi+ Dw_{\rho}\psi_{\delta}(w_{\rho}) D\varphi \right] dz \leqslant 0.
\end{equation}

Tending in (\ref{27}) $\rho\rightarrow 0$ and taking into account the definitions of $\psi_{\delta}$ and $F_{\delta}$ we arrive at
$$
\int\limits_{\left\lbrace w>2\delta\right\rbrace }\left[ -w\partial_t \varphi+Dw D\varphi \right] dz \leqslant 
\int\limits_{\left\lbrace \delta <w <2\delta\right\rbrace } |Dw D\varphi |dz +C\delta.
$$
Letting $\delta \rightarrow 0$ in the above inequality provides the inequality
\begin{equation} \label{28}
\int\limits_{\left\lbrace w>0\right\rbrace }\left[ -w\partial_t\varphi +Dw D \varphi \right] dz \leqslant 0.
\end{equation}
It remains only to recall that $\varphi$ in (\ref{28}) is an arbitrary nonnegative test-function. This completes the proof.
\end{proof}

\vspace{0.2cm}

\begin{lemma} \label{sub-caloricity}
Let $u$ be a solution of Eq. (\ref{main-equation}).
Then for any direction $e\in \mathbb{R}^n$ functions $\left( D_eu\right)_{\pm}$ are sub-caloric in $Q \setminus \Gamma_v$. 
\end{lemma}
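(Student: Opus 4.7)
My plan is to verify the hypothesis of Lemma~\ref{help-sub-caloricity} with $w=D_eu$ and $\mathcal{D}=Q\setminus\Gamma_v$. Since $u\in W^{2,1}_{q,\mathrm{loc}}$ with $q>n+2$, the function $D_eu$ belongs to $C(Q)\cap W^{1,0}_{2,\mathrm{loc}}(Q)$, so the ambient regularity required by the lemma is met. Fix a nonnegative test function $\eta\in C^\infty_0(Q\setminus\Gamma_v)$ with $K:=\mathrm{supp}\,\eta\subset\{D_eu>0\}$. Because $K$ is compact in an open set, $D_eu\geq\delta>0$ on $K$; in particular $Du\neq 0$ throughout $K$, so $K\cap\Gamma\subset\Gamma^*\setminus\Gamma_v$, which by Lemma~\ref{lemma-3.2} is a $C^1$-hypersurface cleanly partitioning $K$ into the two phases $K_\pm:=K\cap\Omega_\pm$.

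Next I move all derivatives off $\eta$. Integration by parts in the direction $e$ (justified by $u\in W^{1,0}_{q,\mathrm{loc}}$), followed by spatial and then temporal integration by parts (justified by $\Delta u,\partial_tu\in L^q_{\mathrm{loc}}$), combined with the pointwise identity $\Delta u-\partial_tu=h[u]$ a.e., yields
\begin{equation*}
\int_{Q}\bigl(-D_eu\,\partial_t\eta+D(D_eu)\cdot D\eta\bigr)\,dz=\int_{Q}(\Delta u-\partial_tu)\,D_e\eta\,dz=\int_{Q}h[u]\,D_e\eta\,dz.
\end{equation*}

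The core of the proof is to show this last integral is non-positive. Split along $\Gamma$ and apply the divergence theorem in $\mathbb{R}^{n+1}$ to the vector field $(e,0)\eta$ on $K_+$ and $K_-$ separately; contributions from $\partial K\setminus\Gamma$ vanish because $\eta$ does. With $\nu^+=(\nu^+_x,\nu^+_t)$ denoting the unit normal of $\Omega_+$ pointing into $\Omega_-$, one obtains
\begin{equation*}
\int_{Q}h[u]\,D_e\eta\,dz=2\int_{\Gamma\cap K}(e\cdot\nu^+_x)\,\eta\,d\sigma.
\end{equation*}
On $\Gamma^*_\alpha\cap K$, Corollary~\ref{corollary-Gamma^*} gives $\partial_tu\leq 0$, while the hysteresis convention forces $\{u>\alpha\}$ to coincide locally with the $\Omega_+$-side; hence $\nabla_{x,t}u$ points from $\Omega_-$ into $\Omega_+$, so $\nu^+_x=-Du/|\nabla_{x,t}u|$ and $e\cdot\nu^+_x=-D_eu/|\nabla_{x,t}u|\leq 0$ on $K$. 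The symmetric argument on $\Gamma^*_\beta\cap K$ (where $\{u>\beta\}\subset\Omega_+$) gives the same sign. Consequently Lemma~\ref{help-sub-caloricity} produces the sub-caloricity of $(D_eu)_+$, and replacing $e$ by $-e$ yields the sub-caloricity of $(D_eu)_-$.

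The main obstacle I foresee is ensuring that the partition $K=K_+\cup K_-$ is sufficiently regular to legalize the divergence-theorem step. The crucial mechanism is that $D_eu\geq\delta>0$ on $K$, which keeps $K\cap\Gamma$ away from both $\Gamma^0$ (where $Du=0$) and $\Gamma_v$, placing it inside the $C^1$-regular portion handled by Lemma~\ref{lemma-3.2}; once this is in place, the sign check via Corollary~\ref{corollary-Gamma^*} is routine.
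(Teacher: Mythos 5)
Your proof is correct and takes essentially the same route as the paper: reduce to Lemma~\ref{help-sub-caloricity} with $w=D_eu$, integrate by parts to turn $\int h[u]\,D_e\eta$ into a surface integral over $\Gamma^*$ via the divergence theorem, and then read off the sign from the fact that the spatial part of the unit normal (into $\Omega_-$) is $-Du/|\nabla_{x,t}u|$ on $\Gamma^*$, while $D_eu>0$ on $\operatorname{supp}\eta$. Your preliminary observation that $D_eu\geqslant\delta>0$ on $K$ forces $K\cap\Gamma\subset\Gamma^*\setminus\Gamma_v$, which underwrites the $C^1$-regularity needed for the divergence theorem, is exactly the point the paper relies on (via Lemma~\ref{lemma-3.2}), so nothing essential is missing.
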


\begin{proof}
Due to Lemma~\ref{help-sub-caloricity} it sufficies to check that for $w=D_eu$ inequality (\ref{inequality-for-subcaloricity}) holds true for any nonnegative function $\eta \in C_0^{\infty}(Q\setminus \Gamma_v)$ with $\textit{supp}\,\eta \subset \left\lbrace D_e u>0\right\rbrace $.

It follows from Eq. (\ref{main-equation})  that functions $D_eu$  satisfy in $Q$ the equation
\begin{equation} \label{equation-for-D_eu}
H\left[ D_eu\right] =D_e\left( h[u]\right) 
\end{equation}
in the weak (distributional) sence. Hence we obtain 
\begin{align*}
\int\limits_{Q } D_eu\left( \partial_t \eta+\Delta \eta\right) dz&=-\int\limits_{Q}h[u]D_e\eta dz
=-\int\limits_{\Omega_+}D_e\eta dz+\int\limits_{\Omega_-}D_e\eta dz\\
&=2\int\limits_{\Gamma^*} \eta \cos{\left( \widehat{\mathbf{n},\mathbf{e}}\right) }d\mathcal{H}^{n},
\end{align*}
where $\mathbf{n}=\mathbf{n}(z)$ is  the unit normal vector to $\Gamma^*$ directed into $\Omega_+$, $\mathbf{e}:=(e,0)$, and $\mathcal{H}^{n}$ stands for the $n$-dimensional Hausdorff measure.

It is easy to see that the normal vector $\mathbf{n}$ has on $\Gamma^*$ the following representation
\begin{equation} \label{formula-for-normal}
\mathbf{n}(z)=\left( \frac{Du(z)}{\sqrt{|Du(z)|^2+(\partial_t u(z))^2}},\frac{\partial_t u(z)}{\sqrt{|Du(z)|^2+(\partial_t u(z))^2}}\right) .
\end{equation}
Indeed, since $u>\alpha$ in $\Omega_+$ and $\Gamma_{\alpha} \subset \left\lbrace u=\alpha\right\rbrace $, the vector $Du(z)$ at $z\in \Gamma^*_{\alpha}$ is directed into $\Omega_+$. In addition, 
we recall (see (\ref{other-side-u_t})) that $\partial_t u \leqslant 0$ on $\Gamma^*_{\alpha}$. Therefore, the projection of $\mathbf{n}$ from formula (\ref{formula-for-normal}) on the $t$-axis is also nonpositive. Because of $\Omega_+$ is locally a subgraph of $\Gamma_{\alpha}$ in $t$-direction, we conclude that on $\Gamma^*_{\alpha}$ the whole  vector $\mathbf{n}$ defined by (\ref{formula-for-normal})  is directed into $\Omega_+$. 
Similarly, we have $\left\lbrace u < \beta\right\rbrace$ in $\Omega_-$ and $\Gamma_\beta \subset \left\lbrace u=\beta\right\rbrace $. Therefore, the spatial gradient $Du(z)$ at $z\in \Gamma^*_{\beta}$ is directed into $\Omega_+$. 
Moreover, 
on $\Gamma^*_{\beta}$ we have $\partial_t u \geqslant 0$ (see (\ref{other-side-u_t-on-Gamma-beta})) and $\Omega_+$ is a $t$-epigraph of $\Gamma^*_{\beta}$. So, 
the  vector $\mathbf{n}$ from formula (\ref{formula-for-normal})  is again directed  into $\Omega_+$. 

Now, taking into account the inclusion $\textit{supp}\,\eta \subset \left\lbrace D_eu >0\right\rbrace$ and representation (\ref{formula-for-normal}) we conclude that 
$$
\eta \cos{\left( \widehat{\mathbf{n(z)},\mathbf{e}}\right) } \geqslant 0 \qquad \forall z\in \Gamma^* 
$$
and complete the proof.
\end{proof}
\vspace{0.2cm}

\begin{remark}
We emphasize that $\left( D_eu\right)_{\pm}$ are, in general, not sub-caloric near~$\Gamma_v$. 
\end{remark}

\section{Quadratic Growth Estimates}

\begin{lemma}  \label{quadratic-lemma}
Let $u$ satisfy (\ref{main-equation}), let  $z^0 \in \Gamma^0 $,  and let 
$$
\textit{dist}_p\left\lbrace z^0, \Gamma_{v}\right\rbrace \geqslant \rho_0>0, \qquad \quad \textit{dist}_p\left\lbrace z^0, \partial'Q\right\rbrace \geqslant \rho_0.
$$ 
There exists a positive constant $C_0$ completely defined by the values of $\rho_0$ and $M$ such that
\begin{equation} \label{4-1}
\underset{Q_r^-(z^0)}{\text{osc}}\, u\leqslant C_0 r^2\qquad \text{for all}\quad r\leqslant \rho_0.
\end{equation}
\end{lemma}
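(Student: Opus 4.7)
The oscillation-type statement reduces, by the symmetry between $\alpha$ and $\beta$, to the case $z^0 \in \Gamma^0_\alpha$. Then $u(z^0) = \alpha$ and $Du(z^0) = 0$, so setting $M(r) := \sup_{Q_r^-(z^0)} |u - \alpha|$ one has $\mathrm{osc}_{Q_r^-(z^0)}\, u \leq 2 M(r)$, and it suffices to prove $M(r) \leq C_0 r^2$ for $r \leq \rho_0$.

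I would argue by contradiction using a Caffarelli-type blow-up. If $\sup_{r \leq \rho_0} M(r)/r^2 = \infty$, a standard dyadic selection produces a sequence $r_k \downarrow 0$ with $\sigma_k := M(r_k)/r_k^2 \to \infty$ and the maximality property $M(2^\ell r_k) \leq 4^\ell M(r_k)$ whenever $2^\ell r_k \leq \rho_0$. Rescale
\[
v_k(y,s) := \frac{u(z^0 + (r_k y, r_k^2 s)) - \alpha}{M(r_k)}, \qquad (y,s) \in Q_{\rho_0/r_k}^-(0).
\]
Then $v_k(0,0) = 0$, $Dv_k(0,0) = 0$, $\sup_{Q_R^-(0)} |v_k| \leq R^2$ for $1 \leq R \leq \rho_0/r_k$, and $v_k$ satisfies $H[v_k] = h[u]/\sigma_k \to 0$ uniformly. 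Because $\mathrm{dist}_p(z^0, \Gamma_v) \geq \rho_0$, the rescaled pathological set recedes to infinity, so Lemma~\ref{sub-caloricity} applies on the full domain $Q_{\rho_0/r_k}^-(0)$: for every direction $e$, both $(D_e v_k)_\pm$ are sub-caloric there.

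Interior parabolic regularity applied after rescaling (the estimate (\ref{W^2_q-estimates}) used at scale $r_k$) yields uniform $C^{1,\gamma}_{\mathrm{loc}}$ bounds, hence along a subsequence $v_k \to v_\infty$ in $C^{1,\gamma}_{\mathrm{loc}}(\mathbb{R}^n \times (-\infty, 0])$. The limit $v_\infty$ is entire caloric with at most quadratic growth, satisfies $v_\infty(0,0) = 0$, $Dv_\infty(0,0) = 0$, $\sup_{Q_1^-(0)} |v_\infty| = 1$, and $(D_e v_\infty)_\pm$ is sub-caloric on all of $\mathbb{R}^n \times (-\infty,0]$ for every $e$. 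A parabolic Liouville theorem then forces $v_\infty(y,s) = \tfrac12 y^\top A y + c s$ with $\mathrm{tr}(A) = c$.

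The main obstacle is ruling out a nontrivial $v_\infty$: the sub-caloricity of $(D_e v_\infty)_\pm$ alone is too weak, since for a linear function $\ell(y) = (Ae)\cdot y$ both $\ell_+$ and $\ell_-$ are automatically sub-harmonic. I would therefore invoke, or establish as one of the preliminary facts in Section~7, a parabolic Alt--Caffarelli--Friedman type monotonicity formula for pairs of non-negative sub-caloric functions with disjoint supports both vanishing at the base point. Applied to $w_1 = (D_e u)_+$, $w_2 = (D_e u)_-$ at~$z^0$, it gives a universal bound on the associated product energy on $(0, \rho_0]$; rescaling and passing to the limit forces the corresponding limit product to vanish, so $(D_e v_\infty)_+ \cdot (D_e v_\infty)_- \equiv 0$. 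For a linear function of $y$ this means $Ae = 0$; since $e$ is arbitrary, $A = 0$, whence the caloricity constraint $\mathrm{tr}(A) = c$ gives $c = 0$. Thus $v_\infty \equiv 0$, contradicting $\sup_{Q_1^-(0)} |v_\infty| = 1$, and the quadratic growth bound follows.
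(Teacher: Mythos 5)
Your approach is essentially the paper's: contradiction via a quadratic blow-up at $z^0\in\Gamma^0$, a Liouville classification of the caloric limit as a homogeneous quadratic, and the parabolic Alt--Caffarelli--Friedman monotonicity formula applied to the sub-caloric pair $(D_eu)_\pm$ (which the paper sets up in Fact~\ref{ACF} and uses through inequalities (\ref{monotonicity-a})--(\ref{Thesis-2.41})). Two remarks. First, there is a logical slip in the final step: ``rescaling and passing to the limit forces the corresponding limit product to vanish, so $(D_e v_\infty)_+ \cdot (D_e v_\infty)_- \equiv 0$, for a linear function of $y$ this means $Ae=0$'' does not hold as written, because the \emph{pointwise} identity $f_+\cdot f_-\equiv 0$ is true for every function $f$ and hence carries no information whatsoever; in particular it certainly does not force $Ae=0$. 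What the vanishing of the rescaled ACF functional actually gives is that the \emph{product of the weighted Dirichlet integrals} of $(D_e v_\infty)_+$ and $(D_e v_\infty)_-$ over the unit cylinder is zero, so one of the two has vanishing gradient, hence is constant, and since it vanishes at the origin it is identically zero; thus $D_e v_\infty$ does not change sign, which for a nontrivial linear function $(Ae)\cdot y$ is impossible, whence $Ae=0$. The paper makes precisely this step explicit via the Poincar\'e inequality between (\ref{rescaled-ACF}) and (\ref{Thesis-2.41}). Second, to get a constant $C_0$ depending only on $\rho_0$ and $M$ the contradiction sequence should allow the solution itself to vary (the paper writes $u_k$); your argument supplies this for free because every quantitative ingredient---the $W^{2,1}_q$ bound (\ref{W^2_q-estimates}) and the ACF bound in Fact~\ref{ACF}---depends only on $\rho_0$, $M$, $\epsilon$, but the point is worth stating.
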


\begin{proof}
We verify inequality (\ref{4-1}) for $z^0 \in \Gamma^0_{\alpha}$. The other case, i.e., $z^0 \in \Gamma^0_{\beta} $ can be proved by using similar arguments.

We argue by contradiction. Suppose (\ref{4-1}) fails. Then there exist a sequence $r_k >0$ as well as sequences $u_k$ of solutions to (\ref{main-equation}) satisfying (\ref{sup-estimate}), and points $z^k \in \Gamma_{\alpha}^0 (u_k)$   such that for all $k \in \mathbb{N}$ we have
$$
\textit{dist}_p \left(  z^k, \Gamma_v (u_k) \right)   \geqslant \rho_0, \qquad \quad \textit{dist}_p \left(  z^k, \partial'Q \right)   \geqslant \rho_0
$$ 
and
\begin{equation} \label{4-3}
\sup\limits_{Q^-_{r_k}(z^k)}|u_k-\alpha |\geqslant kr_k^2.
\end{equation}

Thanks to assumption (\ref{sup-estimate}) the left-hand side of (\ref{4-3}) is bounded by $2M$ and, consequently, $r_k\rightarrow 0$ as $k\rightarrow \infty$. It is evident that we can choose $r_k$ as the maximal value of $r$ for which
$$
\sup\limits_{Q^-_{r}(z^k)}|u_k-\alpha |\geqslant kr^2.
$$
In other words, we have the relations
\begin{equation} \label{4-4}
\left\lbrace \begin{aligned}
\mathcal{M}_r(z^k, u_k)&:=\sup\limits_{Q^-_{r}(z^k)}|u_k-\alpha |<kr^2 \quad \text{for all} \ r\in (r_k,\rho_0],\\
\mathcal{M}_{r_k}(z^k,u_k)&=kr_k^2.
\end{aligned} \right.
\end{equation}
Next, we define a scaling $\tilde{u}_k$ as
$$
\tilde{u}_k(x,t)=\frac{u_k(x^k+r_kx, t^k+r_k^2t)-\alpha}{\mathcal{M}_{r_k}(z^k,u_k)}
$$
for $(x,t)\in Q^-_{\rho_0/r_k}$. Then $\tilde{u}_k$ has the following properties
\begin{equation} \label{4-5}
\sup\limits_{Q_1^-}|\tilde{u}_k|=1,
\end{equation}
\begin{equation} \label{4-6}
\tilde{u}_k(0,0)=0, \qquad |D\tilde{u}_k (0,0)|=0, 
\end{equation}
\begin{equation} \label{4-7}
\|H[\tilde{u}_k]\|_{\infty, Q^-_{1/r_k}}\leqslant \frac{r_k^2}{\mathcal{M}_{r_k}(z^k,u_k)}= \frac{1}{k}\rightarrow 0 \ \text{as}\ k\rightarrow \infty.
\end{equation}
In addition, due to (\ref{4-4}) we have for $R \in (1, \rho_0/r_k]$ the inequality
\begin{equation} \label{4-8}
\sup\limits_{Q_R^-}|\tilde{u}_k|=\frac{\mathcal{M}_{r_kR}(z^k, u_k)}{\mathcal{M}_{r_k}(z^k,u_k)}<\frac{k\left( r_kR\right)^2 }{kr_k^2}=R^2.
\end{equation} \smallskip

Now, by (\ref{4-5})-(\ref{4-8}) we will have a subsequence of $\tilde{u}_k$ weakly converging in $W^{2,1}_{q, loc}\left( \mathbb{R}^{n+1}_{x,t}\cap \left\lbrace t\leqslant 0\right\rbrace \right) $, $q< \infty$, to a caloric function $u_0$ satisfying
\begin{gather*}
\sup\limits_{Q^-_R}|u_0| \leqslant R^2 \qquad \forall R \geqslant 1,\\
u_0(0,0)=|Du_0 (0,0)|=0,
\end{gather*}
\begin{equation} \label{sup-estimate-u_0}
\sup\limits_{Q_1^-}|u_0|=1. 
\end{equation}
According to the Liouville theorem (see, for example, Lemma~2.1 \cite{ASU00}), there exist constants $a^{ij}$ such that 
\begin{equation} \label{result-of Liouville}
u_0(x,t)=a^{ij}x_ix_j+2\left( \sum\limits_{i=1}^n a^{ii}\right) t\quad \text{in}\ \mathbb{R}^{n+1}_{x,t}\cap \left\lbrace t \leqslant 0\right\rbrace .
\end{equation}

On the other hand, due to inequalities (\ref{W^2_q-estimates}),  Lemma~\ref{sub-caloricity} and Fact~\ref{ACF} we may conclude that for any direction $e\in \mathbb{R}^n$ and for all $k\in \mathbb{N}$ such that $r_k \leqslant \rho_0$
\begin{equation}\label{monotonicity-a}
\Phi (r_k, \left( D_eu_k\right)_ +, \left( D_eu_k\right)_-, \xi_{\rho_0, z^k}, z^k) \leqslant c(\rho_0),
\end{equation}
where $c(\rho_0)$ is defined completely by the values of $\rho_0$ and $M$. More precisely, by $c(\rho_0)$ we may take a majorant of the right-hand side of inequality (\ref{local-monotonicity-formula}) calculated for $\theta_1=\left(D_eu_k\right)_+$ and $\theta_2=\left( D_eu_k\right)_-$.   
After simple rescaling (\ref{monotonicity-a}) takes the form
\begin{equation} \label{rescaled-ACF}
\Phi (1, \left( D_e\tilde{u}_k\right)_+, \left( D_e\tilde{u}_k\right)_-, \zeta^k,0,0)\leqslant c(\rho_0)\left( \frac{r_k^2}{\mathcal{M}_{r_k}(z^k,u_k)}\right)^4=\frac{c(\rho_0)}{k^4}, 
\end{equation}
where for brevity  we denote  the corresponding cut-off function $\xi_{\rho_0/r_k, (0,0)}$ by $\zeta^k$. Observe that 
$\zeta^k \equiv 1$ in $B_{\rho_0/(2r_k)}$. In addition, $B_{\rho_0/(2r_k)} \supset B_1$ if $k$ is big enough, while for $\varepsilon>0$ (small and fixed) we have
$$
G(x,-t) \geqslant N(n, \varepsilon)>0 \quad \text{for} \quad -1<t<-\varepsilon, \quad x\in B_{1}.
$$
Hence,
\begin{equation} \label{thesis-2.40}
N(n,\varepsilon) \int\limits_{-1}^{-\varepsilon}\int\limits_{B_{1}} |\left( D_e\tilde{u}_k\right)_{\pm}|^2dxdt \leqslant
\int\limits_{-1}^0 \int\limits_{\mathbb{R}^n} |D_e\left( (\tilde{u}_k)_{\pm}\zeta^k\right)|^2 G(x,-t)dxdt.  
\end{equation}
Next, using (\ref{thesis-2.40}) and invoking the Poincare inequality we may reduce (\ref{rescaled-ACF}) to
\begin{align*}
\int\limits_{-1}^{-\varepsilon}\int\limits_{B_{1}}|\left( D_e\tilde{u}_k\right) _+-m^k_+(t)|^2dxdt
\int\limits_{-1}^{-\varepsilon}&\int\limits_{B_{1}}|\left( D_e\tilde{u}_k\right)_--m^k_-(t)|^2dxdt\\
&\leqslant N^{-2}(n, \varepsilon) \frac{c(\rho_0)}{k^4},
\end{align*}
where $m^k_{\pm}(t)$ denotes the corresponding average of $\left( D_e\tilde{u}_k\right)_{\pm}$ on $t$-sections over $B_{1}$.

Letting $k$ tend to infinity (and then $\varepsilon$ tend to zero), we obtain
\begin{equation} \label{Thesis-2.41}
\int\limits_{Q^-_{1}} |\left( D_eu_0\right) _+-m^+|^2dxdt \int\limits_{Q^-_{1}} |\left( D_eu_0\right)_--m^-|^2dxdt=0,
\end{equation}
where $m^{\pm}$ is the corresponding average of $\left( D_eu_0\right)_{\pm}$ over $B_{1}$. Observe that, due to representation (\ref{result-of Liouville}),  $m^{\pm}$ do not depend on $t$.  

Obviously, (\ref{Thesis-2.41}) implies that $D_eu_0$ does not change its sign in $Q^-_{1}$. 
Recall that  $e$ is an arbitrary direction in $\mathbb{R}^n$ and $u_0$ is a polinomial of the form (\ref{result-of Liouville}). It means, in particulary, that
$
u_0\equiv 0$ in $Q^-_1$. The latter contradicts (\ref{sup-estimate-u_0}) and complete the proof of (\ref{4-1}).
\end{proof}
\vspace{0.2cm}

We will need  the extension of Lemma~\ref{quadratic-lemma} to the "upper half-cylinders" $Q_r (z^0)\cap [t^0, t^0+r^2]$ as well.

\begin{lemma} 
Let all the assumptions of Lemma~\ref{quadratic-lemma} be valid. Then 
\begin{equation} \label{quadratic-whole}
\underset{Q_r(z^0)}{\text{osc}}\, u \leqslant C_1 r^2 \qquad \text{for all}\quad r \leqslant \rho_0,
\end{equation}
where $\rho_0$ is the same constant as in Lemma~\ref{quadratic-lemma} and $C_1=C_1(\rho_0, M)$.
\end{lemma}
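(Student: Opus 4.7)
The plan is to re-run the blow-up / Liouville / ACF argument from Lemma~\ref{quadratic-lemma} verbatim, with the single modification that every sup-norm and every rescaled cylinder is taken over the \emph{full} parabolic cylinder $Q_r$ rather than the backward half $Q_r^-$. The forward-time improvement will come for free from the interior estimates (\ref{W^2_q-estimates}) applied on both sides of $t^k$.

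\medskip

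Assume (\ref{quadratic-whole}) fails; treat the case $z^0\in\Gamma^0_\alpha$ (the case $z^0\in\Gamma^0_\beta$ is analogous). Then there exist $u_k$ satisfying (\ref{sup-estimate}), points $z^k\in\Gamma^0_\alpha(u_k)$ with $\textit{dist}_p(z^k,\Gamma_v(u_k))\ge\rho_0$ and $\textit{dist}_p(z^k,\partial'Q)\ge\rho_0$, and radii $r_k>0$ such that
$$
\sup_{Q_{r_k}(z^k)}|u_k-\alpha|\ge k\,r_k^{2}.
$$
Choose $r_k$ maximal exactly as in (\ref{4-4}) but with the backward cylinders $Q_r^-(z^k)$ replaced by the full cylinders $Q_r(z^k)$. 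The bound (\ref{sup-estimate}) still gives $r_k\to 0$. Next define the scaling $\tilde u_k$ as in the proof of Lemma~\ref{quadratic-lemma}. The analogues of (\ref{4-5})--(\ref{4-7}) are unchanged, while (\ref{4-8}) becomes $\sup_{Q_R}|\tilde u_k|\le R^{2}$ for every $R\in(1,\rho_0/r_k]$.

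\medskip

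The key new ingredient is that (\ref{W^2_q-estimates}) applies on \emph{forward} cylinders as well: the rescaled cylinder $Q_R$ (with $R$ fixed and $r_k\to 0$) lies in the rescaled domain with uniform parabolic distance to its parabolic boundary. Hence $\{\tilde u_k\}$ is uniformly bounded in $W^{2,1}_{q,\mathrm{loc}}(\mathbb{R}^{n+1})$, and extracting a subsequence yields a limit $u_0$ defined on \emph{all} of $\mathbb{R}^{n+1}$, satisfying
$$
H u_0=0 \text{ in } \mathbb{R}^{n+1},\qquad \sup_{Q_R}|u_0|\le R^{2}\ \forall R\ge 1,
$$
together with $u_0(0,0)=0$, $|Du_0(0,0)|=0$ and $\sup_{Q_1}|u_0|=1$. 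Parabolic Liouville (Lemma~2.1 of \cite{ASU00}) then forces $u_0(x,t)=a^{ij}x_ix_j+2\bigl(\sum_i a^{ii}\bigr)t$.

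\medskip

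To reach a contradiction I re-use the ACF step from Lemma~\ref{quadratic-lemma} without change, since it is based on backward cylinders from $z^k$ which survive the blow-up as $Q_R^-(0,0)$. It yields that $D_eu_0$ does not change sign on $Q_1^-$ for every direction $e\in\mathbb{R}^n$. But $D_eu_0(x,t)=2(ae)_jx_j$ is a linear function of $x$ alone, so a sign condition on $B_1$ forces it to vanish identically, hence $a^{ij}=0$. Thus $u_0(x,t)=bt$, and caloricity gives $b=0$, i.e., $u_0\equiv 0$, contradicting $\sup_{Q_1}|u_0|=1$.

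\medskip

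The only nontrivial point is the one italicized above: obtaining the entire-space caloric limit $u_0$ requires the forward-time $W^{2,1}_q$ control on $\tilde u_k$, whereas the original Lemma~\ref{quadratic-lemma} only needed backward control. This is the main (and in fact only) obstacle, and it is overcome by the standing distance hypothesis $\textit{dist}_p(z^0,\partial' Q)\ge\rho_0$ together with $r_k\to 0$.
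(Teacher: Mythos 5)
Your proof is correct, but it takes a genuinely different and considerably heavier route than the paper. Having already established the backward-cylinder bound (\ref{4-1}), the paper propagates it forward in time by a three-line barrier/comparison-principle argument: take the supersolution
$$
w(x,t)=C'\bigl\{|x-x^0|^2+2n(t-t^0)\bigr\}+(t-t^0),\qquad C'=\max\{C_0,\,M\rho_0^{-2}\},
$$
for which $H[w]=-1\leqslant h[u]=H[u-\alpha]$; use (\ref{4-1}) on $\{t=t^0\}$ and the global bound (\ref{sup-estimate}) on $\partial B_{\rho_0}(x^0)$ to control $u-\alpha$ on the parabolic boundary of $B_{\rho_0}(x^0)\times]t^0,t^0+r^2]$; conclude by the maximum principle. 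You instead re-run the entire blow-up/Liouville/ACF machinery on full cylinders. Both routes reach the conclusion, but the barrier argument is the cheap and intended one precisely because it \emph{uses} (\ref{4-1}) rather than re-deriving a strengthening of it from scratch.

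One inaccuracy worth correcting: you claim the forward-time $W^{2,1}_q$ control is "overcome by the standing distance hypothesis $\textit{dist}_p(z^0,\partial' Q)\geqslant\rho_0$ together with $r_k\to 0$." But $\textit{dist}_p$ is by definition a \emph{backward} parabolic distance ($Q_r^-(z)\cap\mathcal{D}=\emptyset$), so it gives no forward margin to the top $\{t=T\}$; the points $z^k$ could lie on $\{t=T\}$. Your blow-up limit $u_0$ is therefore defined on $\mathbb{R}^n\times(-\infty,c]$ with $c=\liminf_k (T-t^k)/r_k^2\in[0,\infty]$, not necessarily all of $\mathbb{R}^{n+1}$. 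This does not sink the argument — the ACF step lives on $Q_1^-$, the Liouville classification already holds on the backward half-space and the quadratic polynomial extends, and $u_0\equiv 0$ contradicts the unit normalization over whatever portion of $Q_1$ is available — but the limit should be stated on the correct domain. (The paper's barrier argument implicitly makes the same assumption that the forward piece $B_{\rho_0}(x^0)\times]t^0,t^0+r^2]$ lies in $Q$, so neither proof handles $t^0+r^2>T$; this is harmless since the oscillation is then taken over $Q_r(z^0)\cap Q$.)
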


\begin{proof}
To obtain estimate (\ref{quadratic-whole}) for $\left\lbrace t>t^0\right\rbrace $ we consider the barrier function
$$
w(x,t)=C'(\rho_0,M)\left\lbrace |x-x^0|^2+2n(t-t^0)\right\rbrace + (t-t^0) ,
$$
where $C'(\rho_0, M)=\max\left\lbrace C_0, M\rho_0^{-2}\right\rbrace$ and $C_0=C_0(\rho_0,M)$ is the constant from Lemma~\ref{quadratic-lemma}. 
Using (\ref{4-1}) for $t=t^0$ and the comparison principle one can easily verify that
\begin{equation}\label{4-2}
|u(x,t)| \leqslant w(x,t) \qquad \text{in}\quad B_{\rho_0}(x^0) \times ]t^0, t^0+r^2].
\end{equation}
Combination of (\ref{4-1}) and (\ref{4-2}) finishes the proof of (\ref{quadratic-whole}).
\end{proof}
\vspace{0.2cm}

\begin{lemma} \label{linear-growth-Du}
Let all the assumptions of Lemma~\ref{quadratic-lemma} be valid. Then
\begin{equation} \label{estimate-Du}
\sup\limits_{Q_r(z^0)} |Du| \leqslant C_2r \qquad \text{for all}\quad r\leqslant \rho_0,
\end{equation}
where $\rho_0>0$ is just the same  as in Lemma~\ref{quadratic-lemma}, while $C_2$ is a positive constant completely defined by the values of $M$ and $\rho_0$.
\end{lemma}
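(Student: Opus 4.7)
The proof is a standard rescaling argument. Fix $z^0 \in \Gamma^0$; for definiteness assume $z^0 \in \Gamma^0_\alpha$, so $u(z^0) = \alpha$. For each $r \in (0, \rho_0/2]$ introduce the rescaled function
$$
\tilde u(y,s) := \frac{u(x^0 + ry, t^0 + r^2 s) - \alpha}{r^2}, \qquad (y,s) \in Q_2.
$$
A direct computation gives $H_y[\tilde u](y,s) = h[u](x^0 + ry, t^0 + r^2 s)$, hence $\|H_y \tilde u\|_{\infty, Q_2} \leqslant 1$, and by the quadratic growth estimate (\ref{quadratic-whole}) applied on $Q_{2r}(z^0) \subset Q_{\rho_0}(z^0)$ we obtain
$$
\|\tilde u\|_{\infty, Q_2} \leqslant \frac{1}{r^2}\underset{Q_{2r}(z^0)}{\text{osc}}\,u \leqslant 4 C_1.
$$

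Now I apply the interior parabolic $W^{2,1}_q$-estimate from (\ref{W^2_q-estimates}) to $\tilde u$ on $Q_2$ with some $q > n+2$; Sobolev embedding then yields $\tilde u \in C^{1,\gamma}(Q_1)$ with
$$
\sup\limits_{Q_1} |D_y \tilde u| \leqslant C'\bigl(\|\tilde u\|_{\infty, Q_2} + \|H_y\tilde u\|_{\infty, Q_2}\bigr) \leqslant C'(\rho_0, M).
$$
Rescaling back via $Du(x,t) = r^{-1} D_y\tilde u\bigl((x-x^0)/r, (t-t^0)/r^2\bigr)$, this becomes $\sup_{Q_r(z^0)} |Du| \leqslant C' r$ for every $r \leqslant \rho_0/2$.

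For the remaining range $r \in (\rho_0/2, \rho_0]$ the cylinder $Q_r(z^0)$ stays at parabolic distance $\geqslant \rho_0$ from $\partial' Q$, hence (\ref{W^2_q-estimates}) combined with Sobolev embedding directly gives $\sup_{Q_r(z^0)} |Du| \leqslant N_1(\rho_0, M) \leqslant (2N_1/\rho_0)\, r$. Setting $C_2 := \max\{C', 2N_1/\rho_0\}$ finishes the proof. The case $z^0 \in \Gamma^0_\beta$ is identical with $\alpha$ replaced by $\beta$. There is no genuine obstacle here: the key input is the quadratic growth lemma, and the rest is classical parabolic regularity applied after a parabolic scaling.
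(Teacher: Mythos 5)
Your proof is correct, but it takes a genuinely different route from the paper, and there is a small slip worth fixing.

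The paper does not rescale. Instead it tests identity (\ref{first-identity}) on $Q_r^-(\tilde z)$ with $\eta=(u-\alpha)\xi^2$ to obtain a Caccioppoli estimate
$\int_{Q_r^-(\tilde z)}|Du|^2\xi^2\,dz\leqslant 2c\,C_1 r^{n+4}$
from (\ref{quadratic-whole}); it then views $D_eu$ as a $V_2$-solution of the divergence-form equation (\ref{equation-for-D_eu}) with $\vec g=h[u]\,e$ bounded, and invokes Fact~\ref{fact-for-estimate-Du}, a De~Giorgi--Moser-type local $L^\infty$ bound, to convert the local $L^2$ bound on $Du$ into $|Du(\tilde z)|\leqslant\tilde c\,r$. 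Your argument instead rescales $u$ by $r^2$, observes that $\tilde u$ is uniformly bounded on $Q_2$ (by the quadratic growth estimate) with $|H_y\tilde u|\leqslant 1$, and then invokes the scale-invariant interior $W^{2,1}_q$ estimate plus parabolic Sobolev embedding. Both approaches hinge exactly on the quadratic growth Lemma~\ref{quadratic-lemma}/\ref{quadratic-whole}, so neither saves a hypothesis. What yours buys is transparency: once the blow-up is written down, the conclusion is a one-line consequence of classical interior regularity for the heat operator. What the paper's route buys is that it stays entirely within the energy/$V_2$ framework (Fact~\ref{fact-4} and Fact~\ref{fact-for-estimate-Du}) used elsewhere, so it needs no Calder\'on--Zygmund machinery.

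Two small points to tidy up. First, the rescaling relation you wrote, $Du(x,t)=r^{-1}D_y\tilde u\bigl((x-x^0)/r,(t-t^0)/r^2\bigr)$, has the wrong power: from $\tilde u(y,s)=\bigl(u(x^0+ry,t^0+r^2s)-\alpha\bigr)/r^2$ one gets $D_y\tilde u=r^{-1}Du$, i.e. $Du=r\,D_y\tilde u$. Your conclusion $\sup_{Q_r(z^0)}|Du|\leqslant C'r$ is consistent with the correct formula, so this is only a typo, but as written the displayed identity contradicts the bound you derive from it. Second, what you actually need in the rescaled picture is the universal, scale-invariant interior estimate $\|\tilde u\|_{W^{2,1}_q(Q_1)}\leqslant C(n,q)\bigl(\|\tilde u\|_{q,Q_2}+\|H\tilde u\|_{q,Q_2}\bigr)$, not the specific non-scale-invariant inequality (\ref{W^2_q-estimates}) with its constant $N_1(\epsilon,q,M)$; the latter is stated for the fixed solution $u$ on the fixed cylinder $Q^\epsilon$, and citing it for the rescaled function on $Q_2$ is a slight abuse. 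This is purely a citation issue; the estimate you want is standard (Ladyzhenskaya--Solonnikov--Uraltseva or Lieberman), and with it the argument goes through.
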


\begin{proof} We verify (\ref{estimate-Du}) for $z^0 \in \Gamma^0_{\alpha}$. The case $z^0 \in \Gamma^0_{\beta}$ is treated in a similar manner.

Let us choose an arbitrary $r \leqslant \rho_0/2$  and consider a point $\tilde{z} \in Q_r(z^0)$. 
Further, we take  identity (\ref{first-identity}) with $Q_{2\rho}^-(z^*)$ replaced by $Q_r^-(\tilde{z}))$ and plug in this identity a test-function
$$
\eta (x,t)=(u(x,t)-\alpha) \xi^2(x)
$$
where  $\xi \in C^{\infty}_0(B_r(\tilde{x}))$ satisfying $0 \leqslant \xi\leqslant 1$ and $|D\xi|\leqslant cr^{-1}$. After standard transformations we get the  inequality
\begin{equation} \label{4-4a}
\begin{aligned}
\int\limits_{B_r(\tilde{x})} (u-\alpha)^2&\xi^2dx \bigg|^{\tilde{t}}+
\int\limits_{Q_r^-(\tilde{z})} |Du|^2 \xi^2dxdt \leqslant \int\limits_{B_r(\tilde{x})} (u-\alpha)^2\xi^2dx \bigg|^{\tilde{t}-r^2}\\  
&+
c \int\limits_{Q_r^-(\tilde{z})} (u-\alpha)^2|D\xi|^2  dxdt
+c\int\limits_{Q_r^-(\tilde{z})} |u-\alpha|\xi^2 dxdt,
\end{aligned}
\end{equation}
where $c$ stands for an absolute constant.

In view of (\ref{quadratic-whole}) the right-hand side of (\ref{4-4}) can be estimated from above by $2c\,C_1(\rho_0,M)r^{n+4}$ which guarantees
$$
\int\limits_{Q_r^-(\tilde{z})} |Du|^2\xi^2dxdt \leqslant 2c\,C_1r^{n+4}.
$$
It remains only to observe that combination of the latter inequality with Eq.~(\ref{equation-for-D_eu}) and Fact~\ref{fact-for-estimate-Du}
 implies the estimate
$$
|Du(\tilde{z})| \leqslant \tilde{c}\,r
$$
which completes the proof.
\end{proof}

\section{Estimates of $\partial_tu$ and $D^2u$ beyond~$\Gamma_v$}

In this section we obtain the estimates of $|\partial_tu (\hat{z})|$ and $|D^2u(\hat{z})|$ in any $\hat{z}$ being a point of smoothness for $u$. We emphasize that these bounds do not depend on the parabolic distance from $\hat{z}$ to $\Gamma^0$ as well as to $\Gamma^*$. Unfortunately, we cannot remove the dependence of both bounds on the parabolic distance from $\hat{z}$ to $\Gamma_v$.

\begin{lemma} \label{estimate-u_t-beyond-Gv}
Let $u$ satisfy (\ref{main-equation}), let $\hat{z} \in Q\setminus \Gamma (u)$, and let
$$
 \textit{dist}_p\left\lbrace \hat{z}, \Gamma_v\right\rbrace \geqslant \rho_0>0,  \qquad  \textit{dist}_p\left\lbrace \hat{z}, \partial'Q\right\rbrace \geqslant \epsilon>0.
$$
There exists a positive constant $C_3$ depending only on $\rho_0$, $\epsilon$,$M$ and $\beta -\alpha$ such that
\begin{equation} \label{u_t-beyond-Gamma_v}
\left|\partial_t u(\hat{z}) \right| \leqslant C_3.
\end{equation}
\end{lemma}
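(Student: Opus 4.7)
The plan is a case split based on the nature of the closest free-boundary point $z^*\in\Gamma(u)$ to $\hat z$, combining Lemma~\ref{one-sided-estimates-u_t}, Corollary~\ref{corollary-Gamma^*}, the quadratic growth (\ref{quadratic-whole}), and the maximum principle applied to $\partial_t u$ as a caloric function inside each phase.

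First I would dispose of the easy case $d:=\textit{dist}_p(\hat z,\Gamma(u))\geq c\min(\rho_0,\epsilon)$ for some fixed $c>0$: there $u\in C^\infty(Q_{d/2}(\hat z))$ satisfies $Hu\equiv\pm 1$, so classical parabolic Schauder interior estimates together with (\ref{sup-estimate}) immediately give $|\partial_t u(\hat z)|\leq C(M,\rho_0,\epsilon)$. So assume $d$ is small. Next, by Remark~\ref{choose-of-rho}, I can fix $\rho=\rho(M,\epsilon,\beta-\alpha)>0$ so small that $Q_{3\rho}^-(\hat z)$ cannot meet both $\Gamma_\alpha$ and $\Gamma_\beta$, and then Lemma~\ref{one-sided-estimates-u_t} yields one of the two bounds $\partial_t u\geq -N$ or $\partial_t u\leq N$ throughout $Q_\rho^-(\hat z)$; without loss of generality say $\partial_t u(\hat z)\geq -N$, and the remaining task is the matching upper bound.

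Pick a nearest free-boundary point $z^*\in\Gamma(u)$; since $\textit{dist}_p(\hat z,\Gamma_v)\geq\rho_0$ and $d$ is small, $z^*\in\Gamma^0\cup\Gamma^*$. In the subcase $z^*\in\Gamma^0$, I invoke the extension (\ref{quadratic-whole}) of Lemma~\ref{quadratic-lemma}, yielding $\|u-\alpha\|_{L^\infty(Q_{cd}(z^*))}\leq C_1 d^2$ (with $|u-\beta|$ if $z^*\in\Gamma_\beta^0$). Because $Q_{d/2}(\hat z)$ lies in a single phase where $Hu\equiv\pm 1$ is constant, applying classical parabolic Schauder interior estimates to $u-\alpha$ on $Q_{d/2}(\hat z)$ produces $|\partial_t u(\hat z)|\leq C\!\left(d^{-2}\|u-\alpha\|_{L^\infty}+1\right)\leq C(C_1+1)$. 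In the subcase $z^*\in\Gamma^*$, Corollary~\ref{corollary-Gamma^*} bounds $|\partial_t u|\leq N_*$ on $\Gamma^*\setminus\Gamma_v\cap Q^\epsilon$, and Lemma~\ref{lemma-3.2} shows $\partial_t u$ is continuous up to $\Gamma^*$ from either side. Since $H(\partial_t u)=0$ in $\Omega_\pm$, I would then propagate the boundary bound inward by the maximum principle applied to $\partial_t u$ in the phase-component of $Q_\rho^-(\hat z)$ containing $\hat z$, using the one-sided bound of the previous paragraph together with the $\Gamma^0$-analysis to control any $\Gamma^0$-pieces of the free-boundary portion of that component's boundary.

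The principal technical obstacle is the $\Gamma^*$-subcase: the region $\Omega_\pm\cap Q_\rho^-(\hat z)$ on which $\partial_t u$ is caloric also has portions of its parabolic boundary on $\partial' Q_\rho^-(\hat z)$ which are not on $\Gamma$. I expect to handle this by taking $\rho$ comparable to $d$ and parametrising the local phase-boundary via the $C^1$-graph of $\Gamma^*$ supplied by Lemma~\ref{lemma-3.2}, patching in the $\Gamma^0$-subcase on any residual pieces, so that every portion of the parabolic boundary of the phase-region is covered either by Corollary~\ref{corollary-Gamma^*}, by the $\Gamma^0$-analysis, or by the one-sided estimate of Lemma~\ref{one-sided-estimates-u_t}; the maximum principle then closes the argument and yields (\ref{u_t-beyond-Gamma_v}).
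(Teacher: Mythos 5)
Your overall strategy is genuinely different from the paper's, and the $\Gamma^0$-subcase is sound: quadratic growth $\operatorname{osc}_{Q_{cd}(z^*)}u\leq C_1 d^2$ makes $u-\alpha$ of size $d^2$ on $Q_{d/2}(\hat z)$, and since $Hu\equiv\pm 1$ there, a scaled interior Schauder (or $W^{2,1}_\infty$) estimate on $u-\alpha$ gives $|\partial_t u(\hat z)|\leq C(d^{-2}\|u-\alpha\|_\infty+1)\leq C$, uniformly in $d$. This is a clean alternative to the paper's De Giorgi iteration at the same scale.

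However, the $\Gamma^*$-subcase has a real gap, and it is exactly the gap you flagged but did not close. To apply the maximum principle to $\partial_t u$ on the phase component of $Q^-_\rho(\hat z)$, you need an $L^\infty$ \emph{upper} bound on $\partial_t u$ on the \emph{entire} parabolic boundary of that component. Corollary~\ref{corollary-Gamma^*} covers the $\Gamma^*$-pieces, and your $\Gamma^0$-analysis covers the $\Gamma^0$-pieces, but a nonnegligible portion of $\partial' Q^-_\rho(\hat z)$ lies strictly inside a phase and is not on $\Gamma$ at all. At those points you have no upper bound on $\partial_t u$; Lemma~\ref{one-sided-estimates-u_t} supplies only the one-sided estimate $\partial_t u\geq -N$ there (in the case that actually needs an upper bound), which is of the wrong sign to feed the maximum principle. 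Shrinking $\rho$ to be comparable to $d$ does not help — the lateral boundary still contains points at distance $\sim d$ from $\Gamma$ inside a phase, and there is no inductive structure that lets you recurse. Nor can you fall back on the Schauder trick as in the $\Gamma^0$-case, because near $\Gamma^*$ the growth of $u-\alpha$ is only linear (the gradient does not vanish), so $d^{-2}\|u-\alpha\|_\infty$ blows up as $d\to 0$.

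The paper avoids this entirely by never invoking the maximum principle at a pointwise level. Instead it runs a De Giorgi truncation with test function $(\partial_t u-k)_+\xi^2$ at threshold $k\geq 2N_*$: because $\partial_t u$ is continuous near $\Gamma^*$ with $|\partial_t u|\leq N_*$ there (Corollary~\ref{corollary-Gamma^*}), the support of this test function stays away from $\Gamma^*$, so the jump term in (\ref{identity-6.2}) vanishes; the resulting Caccioppoli inequality plus Fact~\ref{fact-4} then bounds $\sup\partial_t u$ by the $L^2$-norm of $\partial_t u$ over a fixed cylinder, which is controlled either by the quadratic growth estimates (when $\hat z$ is close to $\Gamma^0$) or by the global $W^{2,1}_2$ bound (\ref{W^2_q-estimates}) (when $\hat z$ is far from $\Gamma^0$). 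The key point is that the De Giorgi route needs only $L^2$ control on the "free" part of the boundary, whereas your maximum-principle route needs $L^\infty$ control that you do not have. To repair your argument you would need to replace the maximum principle in the $\Gamma^*$-subcase with this (or some other) energy-based truncation argument.
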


\begin{proof} Define $d_0=d_0(\hat{z}):=\min \left\lbrace \textit{dist}_p\left\lbrace \hat{z}, \Gamma^0 \right\rbrace, \rho_0, \epsilon/2\right\rbrace $. Without loss of generality we may suppose that $Q_{d_0}^-\left( \hat{z}\right) \cap \Gamma_{\beta}=\emptyset$. Due to Lemma~\ref{one-sided-estimates-u_t} we need only to estimate $\partial_tu\left( \hat{z}\right) $ from above. It is obvious that for any small $\delta >0$
$$ 
Q_{d_0/2}^-(\hat{x}, \hat{t}-\delta) \cap \left\lbrace \Gamma^0 \cup \Gamma_v \cup \partial'Q\right\rbrace =\emptyset.
$$
However, $Q_{d_0/2}^-(\hat{x}, \hat{t}-\delta)$ may contain the points of $\Gamma_{\alpha}^* \setminus \Gamma_v$.

\begin{itemize}
\item[$\boxed{1.}$] First, we consider the case $d_0=\textit{dist}_p\left\lbrace \hat{z}, \Gamma^0 \right\rbrace$. 

Using the same arguments as in the derivation of (\ref{third-identity}) in the proof of Lemma~\ref{one-sided-estimates-u_t} we get for all test-functions $\eta \in W^{1,1}_2 (Q_{d_0/2}^-(\tilde{x}, \tilde{t}-\delta))$ vanishing on $\partial'Q_{d_0/2}^-(\tilde{x}, \tilde{t}-\delta)$  the equality
\begin{equation} \label{identity-6.1}
\begin{aligned}
\int\limits_{Q_{d_0/2}^{-}(\hat{x}, \hat{t}-\delta)}&\left[ \partial_t u^{(\tau)} \eta \right. +\left. Du^{(\tau)} D\eta\right] dxdt\\
&=-\frac{1}{\tau}\int\limits_{Q_{d_0/2}^-(\hat{x}, \hat{t}-\delta)}
\left(  h[u](x,t)-h[u](x,t-\tau)\right)  \eta dxdt,
\end{aligned}
\end{equation}
where $u^{(\tau)}$ denotes the difference quotient of $u$ in the $t$-direction.

Plugging in (\ref{identity-6.1}) 
$$
\eta (x,t)=\left( \partial_tu(x,t)-k\right)_+\xi^2(x,t), \qquad k \geqslant 2N_*,
$$
where $\xi$  is a standard cut-off function for a cylinder $Q_{d_0/2}^-(\hat{x}, \hat{t}-\delta)$ (see Notation), and $N_*$ is the constant from Corollary~\ref{corollary-Gamma^*}, we arrive at the relation
\begin{equation} \label{identity-6.2}
\begin{aligned}
&\int\limits_{Q_{d_0/2}^-(\hat{x}, \hat{t}-\delta)}\left\lbrace \partial_t u^{(\tau)}\left( \partial_t u-k\right)_+\xi^2+
Du^{(\tau)}D\left[ \left( \partial_t u-k\right)_+\xi^2 \right] \right\rbrace dxdt\\  
&=-\frac{1}{\tau}\int\limits_{Q_{d_0/2}^-(\hat{x}, \hat{t}-\delta)}
\left\lbrace   h[u](x,t)-h[u](x,t-\tau)\right\rbrace  \left( \partial_t u-k\right)_+\xi^2 dxdt. 
\end{aligned}
\end{equation}

Observe that due to Corollary~\ref{corollary-Gamma^*}  the distance from the set
$\left\lbrace \textit{supp}\,\eta \right\rbrace $ to $\Gamma (u)$\ is positive. 
Therefore, $\partial_tu$ is smooth on  $\left\lbrace \textit{supp}\, \eta\right\rbrace $ and the right-hand side of (\ref{identity-6.2}) vanishes if $\tau$ is small enough. In addition, we make take in (\ref{identity-6.2}) the cut-off function $\xi$ multiplied by the characteristic function  of an interval $[\hat{t}-\delta-d_0^2/4, t]$ with an arbitrary $t \in ]\hat{t}-\delta-d_0^2/4, \hat{t}-\delta]$. This leads for sufficiently small $\tau$ to the inequalities
\begin{gather*}
\int\limits_{\hat{t}-\delta-d_0^2/4}^{t}\int\limits_{B_{d_0/2}(\hat{x})}\left\lbrace \partial_t u^{(\tau)}\left( \partial_t u-k\right)_+\xi^2+
Du^{(\tau)}D\left[ \left( \partial_t u-k\right)_+\xi^2 \right] \right\rbrace dxdt \leqslant 0\\
\forall t\in  ]\hat{t}-\delta-d_0^2/4, \hat{t}-\delta].
\end{gather*}

Now,  we let in the latter inequalities $\tau \rightarrow 0$ and then leave the nonnegative terms in the left-hand side, transfer the rest terms to the right-hand side and estimate these rest terms from above via Young's inequality. As a consequence, for $k \geqslant 2N_*$ we get the inequalities
\begin{align*}
\sup\limits_{\hat{t}-d_0^2/4<t<\hat{t}-\delta}\int\limits_{B_{d_0/2}(\hat{x})}&\left( \partial_tu-k\right)_+dx\bigg|^{t} +
\int\limits_{Q_{d_0/2}^-(\hat{x}, \hat{t}-\delta)\cap \left\lbrace \partial_tu>k\right\rbrace }|D\left( \partial_tu\right) |^2\xi^2dxdt \\
& \leqslant c \int\limits_{Q_{d_0/2}^-(\hat{x}, \hat{t}-\delta)}\left( \partial_t u-k\right)_+\left[ |D\xi|^2+2\xi |\partial_t \xi|\right] dxdt. 
\end{align*}
Application of Fact~\ref{fact-4} with $v=\partial_t u$ implies the estimate
\begin{equation} \label{6.6}
\partial_t u (\hat{x}, \hat{t}-\delta) \leqslant 2N_* + N_0\sqrt{\fint_{Q_{d_0/2}^-(\hat{x}, \hat{t}-\delta)}|\partial_t u|^2 dxdt}.
\end{equation}

In order to obtain a bound for the integral term on the right-hand side of (\ref{6.6}) we take identity (\ref{first-identity}) with $Q_{2\rho}^-(z^*)$ replaced by $Q_{d_0}^-(\hat{x}, \hat{t}-\delta)$
and plug in this identity  a test-function
$$
\eta (x,t)=\partial_t u(x,t) \zeta^2 (x),
$$
where $\zeta$ is a smooth  cut-off function in $B_{d_0}(\hat{x})$ that equals $1$ in $B_{d_0/2}(\hat{z})$ and  vanishes outside of $B_{3d_0/4}(\hat{x})$. After standard manipulations and taking into account  Lemma~\ref{linear-growth-Du}  we end up with

\begin{equation} \label{6.8}
\begin{aligned}
\int\limits_{Q_{3d_0/4}^-(\hat{x}, \hat{t}-\delta)}&|\partial_t u|^2\zeta^2 dxdt \leqslant \\&+c \int\limits_{Q_{3d_0/4}^-(\hat{x}, \hat{t}-\delta)}\left( h^2[u]\zeta^2+|Du|^2|D\zeta|^2\right) dxdt+cd_0^{n+2}\\
&\leqslant \tilde{c} \left( d_0\right) ^{n+2}+\tilde{c}\left( d_0\right)^{-2}\int \limits_{Q_{3d_0/4}^-(\hat{x}, \hat{t}-\delta)}|Du|^2dxdt\\
& \leqslant \tilde{c} \left\lbrace  1+C_1^2\right\rbrace  \left( d_0\right) ^{n+2}.
\end{aligned}
\end{equation}

Thus, combination of (\ref{6.6}) and (\ref{6.8}) provides the estimate 
$$
\partial_t u(\hat{x}, \hat{t}-\delta) \leqslant 2N_*+N_0\sqrt{\tilde{c}\left\lbrace 1+C_1^2(\rho_0, \epsilon, M)\right\rbrace }.
$$
Observe that the constant on the right-hand side of the above inequality depends neither on $d_0$ nor on $\delta$. Thus, we get
$$
\partial_t u(\hat{z}) \leqslant 2N_*+N_0\sqrt{\tilde{c}\left\lbrace 1+C_1^2(\rho_0, \epsilon, M)\right\rbrace }.
$$
\item[$\boxed{2.}$] Suppose now that $d_0=\min\left\lbrace \rho_0, \epsilon/2\right\rbrace $. In this case we repeat all the above arguments up to deriving (\ref{6.6}). Then we estimate the integral term on the right-hand side of (\ref{6.6}) with the help of inequalities (\ref{W^2_q-estimates}) with $q=2$. This gives us the bound
$$
\int\limits_{Q_{d_0/2}^-(\hat{x}, \hat{t}-\delta)} |\partial_tu|^2 dxdt \leqslant N_1(\epsilon, 2, M)
$$
which together with (\ref{6.6}) implies
$$
\partial_t u(\hat{x}, \hat{t}-\delta) \leqslant 2N_*+N_0N_1^{1/2}\left(\min \left\lbrace \rho_0, \epsilon\right\rbrace\right) ^{-1-n/2}   .
$$
Again, the right-hand side of the latter bound is independent of $\delta$ as well as of the parabolic distance from $\hat{z}$ to $\Gamma^0$.
\end{itemize}
Repeating the above arguments for the function $-u$ instead of $u$ we complete the proof.
\end{proof}

\vspace{0.2cm}

\begin{lemma} \label{estimate-for-D^2u}
Let $u$ satisfy the same assumptions as in Lemma~\ref{estimate-u_t-beyond-Gv}. Then there exists a positive constant $C_4$ depending only on $\rho_0$, $\epsilon$, $M$ and $\beta -\alpha$ such that
\begin{equation} \label{D^2u-beyond-Gamma_v}
\left|D^2 u(\hat{z}) \right| \leqslant C_4.
\end{equation}
\end{lemma}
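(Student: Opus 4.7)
My strategy combines interior parabolic estimates with the quadratic growth estimate (Lemma~\ref{quadratic-lemma}) and the von Mises transformation from Lemma~\ref{lemma-3.2}, splitting cases on $\sigma := \textit{dist}_p(\hat z, \Gamma(u))$ and the type of the nearest free-boundary point. Since $\textit{dist}_p(\hat z,\Gamma_v) \geq \rho_0$, any free-boundary point close to $\hat z$ lies in $\Gamma^0 \cup \Gamma^*$. If $\sigma$ is bounded below by a constant depending on $(\rho_0,\epsilon)$, then $u$ is caloric modulo the constant right-hand side $h_0 := h[u](\hat z) \in \{\pm 1\}$ throughout $Q_{\sigma/2}^-(\hat z)$, and classical interior estimates immediately yield $|D^2 u(\hat z)| \leq C(\rho_0,\epsilon,M)$.

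Suppose $\sigma$ is small and the nearest free-boundary point $z^*$ lies in $\Gamma^0$. I apply Lemma~\ref{quadratic-lemma} at $z^*$ to obtain $\textit{osc}_{Q_{2\sigma}(z^*)} u \leq C\sigma^2$, and since $Q_{\sigma/2}^-(\hat z)\subset Q_{2\sigma}(z^*)$ with $Hu\equiv h_0$ there, the function $w(x,t) := u(x,t) - c_0 + h_0(t-\hat t)$, with $c_0 \in\{\alpha,\beta\}$, satisfies $Hw = 0$ and $\|w\|_{\infty, Q_{\sigma/2}^-(\hat z)} \leq C\sigma^2$. The standard pointwise caloric estimate $|D^2 w(\hat z)| \leq C\sigma^{-2}\sup|w|$ then gives $|D^2 u(\hat z)| = |D^2 w(\hat z)| \leq C$ uniformly in $\sigma$ (since $u-w$ depends only on $t$, so the Hessians coincide).

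Suppose instead the nearest free-boundary point $z^*$ lies in $\Gamma^*\setminus\Gamma_v$. I use the von Mises transformation from the proof of Lemma~\ref{lemma-3.2}: assuming without loss of generality that $D_1 u(z^*) > 0$ and setting $y := u - \alpha$, $x_1 = v(y,x',t)$, the function $v$ solves a uniformly parabolic equation with H\"older-continuous coefficients $a^{ij}(\partial v)$ (using $u \in W^{2,1}_q$) and right-hand side $g(y)\partial_1 v$ that is constant in $y$ on each of $\{y>0\}$ and $\{y<0\}$. Applying one-sided parabolic Schauder estimates on the side containing the image of $\hat z$ yields $v\in C^{2,\sigma'}$ up to $\{y=0\}$, and formulas~(\ref{second-derivatives-v}) translate this into the pointwise bound $|D^2 u(\hat z)| \leq C$.

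The principal obstacle is bridging Cases 2 and 3. The Schauder constants in Case 3 degenerate as $|Du|\to 0$, because the ellipticity ratio of the matrix $a^{ij}$ in~(\ref{formula-for-a_ij}) scales like $|Du|^2$; conversely, Case 2 is most effective precisely when $\hat z$ is near $\Gamma^0$, where by Lemma~\ref{linear-growth-Du} the gradient $|Du|$ is forced small. Choosing a threshold on $|Du(\hat z)|$ that cleanly separates the two regimes—so that below the threshold one controls $D^2 u$ via Case 2 (possibly after a parabolic rescaling that brings $\hat z$ into quadratic-growth range of $\Gamma^0$), while above it the von Mises Schauder constants remain controlled by $(\rho_0,\epsilon,M,\beta-\alpha)$—is the delicate step that yields the final bound $C_4$.
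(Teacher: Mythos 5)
Your overall strategy differs fundamentally from the paper's, and it contains a genuine gap that you yourself flag but do not close. The paper does not split cases according to whether the nearest free--boundary point lies in $\Gamma^0$ or in $\Gamma^*$; instead it uses a single, uniform tool that sidesteps that dichotomy entirely. Concretely: choose $e\perp Du(\hat z)$ (or $e$ arbitrary if $Du(\hat z)=0$), so that $D_eu(\hat z)=0$; by Lemma~\ref{sub-caloricity} the functions $(D_eu)_\pm$ are sub-caloric in $Q\setminus\Gamma_v$, which is a property valid on the \emph{whole} complement of $\Gamma_v$, irrespective of whether $\Gamma^0$ or $\Gamma^*$ is closer. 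Then Fact~\ref{lemma4.2-Ura2007}, derived from the local monotonicity formula Fact~\ref{ACF}, gives the pointwise estimate $|D(D_eu)(\hat z)|\leqslant \tilde N'\sqrt{d_0^{-2}\fint_{Q_{d_0}^-(\hat z)}|D_eu|^2}$, and the right-hand side is bounded by Lemma~\ref{linear-growth-Du} (near $\Gamma^0$) or by (\ref{W^2_q-estimates}) with $q=2$ (otherwise). The last second derivative $D_\nu D_\nu u$ is recovered from the equation using Lemma~\ref{estimate-u_t-beyond-Gv}. The sub-caloricity/monotonicity argument is precisely what makes the estimate uniform as $|Du(\hat z)|\to 0$.

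The gap in your proposal is the bridging step between your Cases~2 and~3, which you correctly identify as delicate but do not resolve. Case~3 (von Mises plus one-sided Schauder) produces constants that depend on a positive lower bound for $|Du|$ near the nearest point $z^*\in\Gamma^*$; the paper's Lemma~\ref{lemma-3.2} establishes only \emph{local} H\"older continuity in an unquantified neighborhood, and in fact $|Du|$ on $\Gamma^*$ may be arbitrarily small (e.g.\ at points of $\Gamma^*$ approaching $\Gamma^0$), so these Schauder constants are not under control. Conversely, Case~2 requires $\hat z$ to be within quadratic-growth range of an actual point of $\Gamma^0$, which is not guaranteed when the nearest free-boundary point lies in $\Gamma^*$ and $\Gamma^0$ is far away. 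A threshold on $|Du(\hat z)|$ does not separate the regimes cleanly, because $|Du(\hat z)|$ being bounded below does not imply $|Du|$ is bounded below near $z^*$, and $|Du(\hat z)|$ being small does not place $\hat z$ near $\Gamma^0$. To make your route work you would essentially need a quantitative nondegeneracy of $|Du|$ on $\Gamma^*$, which the paper neither has nor needs; the sub-caloricity route circumvents the issue by never invoking any lower bound on the gradient.
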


\begin{proof} Let $\hat{z}\in Q\setminus \Gamma (u)$ be fixed, and let $\nu=Du(\hat{z}) /|Du(\hat{z})|$. Suppose that $e$ is an arbitrary direction in $\mathbb{R}^n$ if $|Du(\hat{z})|=0$ and $e \perp \nu$ otherwise. We also define $d_0=d_0(\hat{z}):=\min\left\lbrace \textit{dist}_p \left\lbrace \hat{z}, \Gamma^0\right\rbrace, \rho_0, \epsilon/2\right\rbrace  $. 


In view of our choice of $e$ we have $D_eu(\hat{z})=0$ and, consequently, we may apply  Fact~\ref{lemma4.2-Ura2007}  to the sub-caloric functions $v=\left( D_eu\right)_{\pm}$ in $Q_{d_0}^-(\hat{z})$. From here,  taking into account  Lemma~\ref{linear-growth-Du}, we obtain the estimate
$$
|D(D_eu)(\hat{z})|\leqslant C_4 (\rho_0, \epsilon, M, \beta-\alpha),
$$ 
where $C_4$ does not depend on $d_0$. Since $e$ is an arbitrary direction in $\mathbb{R}^n$ satisfying $e\perp \nu$, the derivative $D_{\nu}(D_{\nu}u(\hat{z}))$ can now be estimated from Eq.~(\ref{main-equation}). Thus, we proved the desired inequality (\ref{D^2u-beyond-Gamma_v}).
\end{proof}

\section{Appendix}

For the readers  convenience and for the references, we recall and explain several facts. Most of these auxiliary results are known, but probably not well known in the context used in this paper.
\vspace{0.2cm}

\begin{fact} \label{fact-4}
Let $r_0 \in (0,1)$, and let $v\in V_2\left( Q_{r_0}^-(z^*)\right) $ satisfy the inequalities
\begin{align*}
\sup\limits_{t^*-r_0^2<t<t^*}\int\limits_{B_{r_0}(x^*)} \left( v-k\right)^2_+\xi^2dx \bigg|^{t}&+ 
\int\limits_{Q_{r_0}^-(z^*)}\left[ D\left( \left( v-k\right)_+\right)\right]^2 \xi^2dz\\   
& \leqslant c\int\limits_{Q_{r_0}^-(z^*)}\left( v-k\right)_+^2\left[ |D\xi|^2+\xi|\partial_t\xi |\right] dz
\end{align*}
 for all $k \geqslant k_0$ and all cut-off functions $\xi=\xi(x,t)$ for the cylinder $Q_{r_0}^{-}(z^*)$ (see Notation). Here $c$ stands for a  positive constant.

Then there exists a positive constant $N_0=N_0(c)$  such that
$$
\sup\limits_{Q_{r_0/2}^-(z^*)}v \leqslant k_0+N_0\sqrt{\fint\limits_{Q_{r_0}^-(z^*)}\left( v-k_0\right)_+^2(z)dz}.
$$
\end{fact}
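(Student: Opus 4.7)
The plan is to carry out the standard De Giorgi iteration on super-level sets, combining the hypothesized Caccioppoli-type inequality with the parabolic Sobolev embedding of $V_2$ into $L^{2(n+2)/n}$, and closing the loop by a Ladyzhenskaya--Ural'tseva type iteration lemma.

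First I would set up the dyadic geometry. Introduce shrinking cylinders $Q_j := Q_{r_j}^-(z^*)$ with $r_j := \tfrac{r_0}{2}(1+2^{-j})$, so that $Q_0 = Q_{r_0}^-(z^*)$ and $Q_j \downarrow Q_{r_0/2}^-(z^*)$; strictly increasing levels $k_j := k_0 + d(1-2^{-j})$, where $d>0$ will be chosen at the end; and cut-off functions $\xi_j$ for $Q_j$ with $\xi_j \equiv 1$ on $Q_{j+1}$ and $|D\xi_j|^2 + \xi_j|\partial_t\xi_j| \leq C\cdot 4^j/r_0^2$. Set $A_j := \int_{Q_j}(v-k_j)_+^2\, dz$.

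Next I would apply the hypothesized inequality with $k = k_{j+1}\geq k_0$ and $\xi = \xi_j$. Because $(v-k_{j+1})_+\leq (v-k_j)_+$ and $\xi_j\leq 1$, its right-hand side is bounded by $C\,4^j r_0^{-2} A_j$, while its left-hand side (after the routine step $|D((v-k_{j+1})_+\xi_j)|^2\leq 2|D(v-k_{j+1})_+|^2\xi_j^2 + 2(v-k_{j+1})_+^2|D\xi_j|^2$) controls $\|(v-k_{j+1})_+\xi_j\|_{V_2(Q_j)}^2$. The parabolic embedding $\|w\|_{L^{2(n+2)/n}(Q_j)}\leq C\|w\|_{V_2(Q_j)}$ applied to $w=(v-k_{j+1})_+\xi_j$ then yields
$$\int_{Q_{j+1}}(v-k_{j+1})_+^{2(n+2)/n}\, dz \;\leq\; \Bigl(C\,\tfrac{4^j}{r_0^2}\, A_j\Bigr)^{(n+2)/n}.$$
Combining this with the Chebyshev-type measure estimate
$$\bigl|\{v > k_{j+1}\}\cap Q_{j+1}\bigr| \;\leq\; (k_{j+1}-k_j)^{-2} A_j \;=\; \frac{4^{j+1}}{d^2}\, A_j$$
via Hölder's inequality produces the recursion
$$A_{j+1} \;\leq\; \frac{C\,b^j}{r_0^2\, d^{4/(n+2)}}\, A_j^{1+2/(n+2)}$$
for absolute constants $C=C(c)$ and $b>1$.

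Finally I would invoke the classical iteration lemma (cf.\ Ladyzhenskaya--Solonnikov--Ural'tseva, Lemma~II.5.6): if $A_0$ lies below an explicit threshold of the form $A_0\leq c_*\, r_0^{n+2}\, d^2$, then $A_j\to 0$. Since $A_0 = |Q_{r_0}^-(z^*)|\cdot \fint_{Q_0}(v-k_0)_+^2\, dz$ and $|Q_{r_0}^-(z^*)|$ scales like $r_0^{n+2}$, the threshold is met as soon as one picks
$$d \;=\; N_0\sqrt{\,\fint_{Q_{r_0}^-(z^*)}(v-k_0)_+^2\, dz\,}$$
with $N_0 = N_0(c)$ large enough. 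The vanishing $A_j\to 0$ then forces $(v-k_0-d)_+ = 0$ a.e.\ in $Q_{r_0/2}^-(z^*)$, which is precisely the claimed bound. There is no genuine obstacle here; the argument is textbook De~Giorgi iteration, and the only mild care required is in tracking the powers of $r_0$ and $d$ so that the final threshold rescales to an average integral rather than an absolute $L^2$ norm.
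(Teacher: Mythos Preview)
Your argument is correct: this is precisely the De~Giorgi level-set iteration that underlies Theorem~6.2, Chapter~II of \cite{LSU67}, which is exactly what the paper invokes (the paper gives no independent proof and simply refers to that result). The only cosmetic point is that your $\xi_j$ should be taken as cut-off functions for the fixed cylinder $Q_{r_0}^-(z^*)$ in the sense of the paper's Notation (supported in $B_{r_0}(x^*)$ and vanishing at $t=t^*-r_0^2$) that are additionally $\equiv 1$ on $Q_{j+1}$ and supported in $Q_j$; this is compatible with the stated hypothesis and yields the same gradient bounds you use.
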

\begin{proof}
For the proof of this assertion we refer the reader to (the proof of) Theorem~6.2, Chapter II \cite{LSU67}.
\end{proof}

\vspace{0.2cm}

\begin{fact} \label{fact-for-estimate-Du}
Let  $\mathcal{D}$ be a domain in $\mathbb{R}^{n+1}$, and let $g^i \in L^{\infty}(\mathcal{D})$, $i=0,1,\dots,n$. Then if $v\in V_2(\mathcal{D})$ is a solution of the equation
$$
H[v]=\text{div}\, \vec{g}+g^0, \qquad \vec{g}=\left( g^1, \dots, g^n\right) 
$$ 
in $\mathcal{D}$, we have, for any cylinder $Q_{2R}^-(z^0) \subset \mathcal{D}$,
$$
\sup\limits_{Q_R^-(z^0)}|v| \leqslant \hat{N}_0\sqrt{\fint_{Q_{2R}^-(z^0)}v^2dxdt}+\hat{N}_1R\|\vec{g}\|_{\infty, Q_{2R}^-(z^0)}+\hat{N}_2R^2 \|g^0\|_{\infty, Q_{2R}^-(z^0)}
$$
\end{fact}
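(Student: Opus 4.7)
The plan is to deduce Fact~\ref{fact-for-estimate-Du} from Fact~\ref{fact-4} via a Caccioppoli-type energy inequality for the truncations $(v-k)_+$, with the threshold $k_0$ tuned to absorb the divergence datum $\vec g$ and the $L^\infty$ forcing $g^0$. The argument for $-v$ is symmetric, and combined with the one for $v$ it yields the stated bound for $|v|$.

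First I would write the equation in its weak (distributional) form, test it against $\eta=(v-k)_+\xi^2$ with $\xi$ a standard cutoff for $Q_{2R}^-(z^0)$ as in the Notation, and use $\partial_t v\cdot(v-k)_+=\tfrac{1}{2}\partial_t(v-k)_+^2$ together with Young's inequality applied both to the term $\int\vec g\cdot D[(v-k)_+\xi^2]$ (splitting off a piece $\tfrac12|D(v-k)_+|^2\xi^2$ to be absorbed on the left, plus a remainder bounded by $\tfrac12|\vec g|^2\chi_{\{v>k\}}\xi^2$) and to $\int g^0(v-k)_+\xi^2$. After integrating over $[t^0-4R^2,t]$ for every $t\in\,]t^0-4R^2,t^0]$ this yields
\begin{align*}
&\sup_{t}\int_{B_{2R}}(v-k)_+^2\xi^2\,dx+\int_{Q_{2R}^-(z^0)}|D(v-k)_+|^2\xi^2\,dz\\
&\qquad\leq c\int(v-k)_+^2\bigl[|D\xi|^2+\xi|\partial_t\xi|\bigr]dz+c\,\|\vec g\|_{\infty}^2\,|\{v>k\}\cap\mathrm{supp}\,\xi|+c\,\|g^0\|_\infty\int(v-k)_+\xi^2\,dz.
\end{align*}

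The next step is to absorb the last two terms into the first. By Chebyshev--Stampacchia, for $k>k_0$ one has $|\{v>k\}\cap\mathrm{supp}\,\xi|\leq(k-k_0)^{-2}\int(v-k_0)_+^2$ and $\int(v-k)_+\xi^2\leq(k-k_0)^{-1}\int(v-k_0)_+^2$. Choosing
$$
k_0:=c_0\bigl(R\|\vec g\|_{\infty,Q_{2R}^-(z^0)}+R^2\|g^0\|_{\infty,Q_{2R}^-(z^0)}\bigr)
$$
with an absolute constant $c_0$ sufficiently large, and recalling that $|D\xi|\sim R^{-1}$ and $|\partial_t\xi|\sim R^{-2}$ on $\mathrm{supp}\,\xi$, the $\vec g$ and $g^0$ contributions are dominated by $c'\int(v-k)_+^2[|D\xi|^2+\xi|\partial_t\xi|]dz$ for every $k\geq k_0$. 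The resulting inequality is exactly the hypothesis of Fact~\ref{fact-4} for $v$ on the cylinder $Q_{2R}^-(z^0)$ with this $k_0$.

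Invoking Fact~\ref{fact-4} (after the obvious parabolic rescaling reducing $r_0=2R$ to $r_0=1$, or by tracking the $R$--dependence directly) gives
$$
\sup_{Q_R^-(z^0)}v\leq k_0+N_0\sqrt{\fint_{Q_{2R}^-(z^0)}(v-k_0)_+^2\,dz}\leq k_0+N_0\sqrt{\fint_{Q_{2R}^-(z^0)}v^2\,dz},
$$
and applying the same argument to $-v$, which solves the equation with $\vec g,g^0$ replaced by $-\vec g,-g^0$, yields the symmetric bound for $\sup_{Q_R^-(z^0)}(-v)$. Combining the two produces the desired estimate with the constants $\hat N_0,\hat N_1,\hat N_2$. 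The main technical point is the bookkeeping of parabolic scaling: the factor $R$ in front of $\|\vec g\|_\infty$ and $R^2$ in front of $\|g^0\|_\infty$ must emerge precisely from the Chebyshev absorption combined with $|D\xi|\sim R^{-1}$ and $|\partial_t\xi|\sim R^{-2}$.
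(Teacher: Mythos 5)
Your overall strategy---test the weak form against $(v-k)_+\xi^2$, derive a Caccioppoli inequality, and run a De Giorgi iteration---is the standard one, whereas the paper itself simply cites \S6, Ch.~II and \S8, Ch.~III of \cite{LSU67} (and Theorem~6.17 of \cite{Lie96}); so in principle a self-contained reduction to Fact~\ref{fact-4} would be a nice addition. However, the absorption step at the heart of your argument does not work. You claim that, with $k_0 := c_0(R\|\vec g\|_\infty + R^2\|g^0\|_\infty)$, the inhomogeneous contributions
$\|\vec g\|_\infty^2\,|\{v>k\}\cap\mathrm{supp}\,\xi| + \|g^0\|_\infty\int(v-k)_+\xi^2$
are dominated by $c'\int(v-k)_+^2\bigl[|D\xi|^2+\xi|\partial_t\xi|\bigr]$ for every $k\geq k_0$ and every admissible cut-off $\xi$. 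The Chebyshev--Stampacchia bounds you invoke give control of the former two quantities in terms of $\int(v-k_0)_+^2$ times negative powers of $(k-k_0)$; that quantity blows up as $k\to k_0^+$, while the target integral $\int(v-k)_+^2[|D\xi|^2+\xi|\partial_t\xi|]$ is bounded and even vanishes for a cut-off $\xi$ whose derivatives are supported away from $\{v>k\}$ (e.g.\ $\xi\equiv1$ on $B_{R}$, $D\xi$ supported in the annulus). Hence the hypothesis of Fact~\ref{fact-4} is not obtained for all $k\geq k_0$, and Fact~\ref{fact-4} cannot be invoked verbatim.

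The repair is to keep, rather than absorb, the inhomogeneous term. After applying Young's inequality to the $g^0$-contribution and absorbing the resulting piece $\lambda\int(v-k)_+^2\xi^2$ into the $\sup_t$-term (via $\int(v-k)_+^2\xi^2\,dz\le 4R^2\sup_t\int(v-k)_+^2\xi^2\,dx$ with $\lambda\sim R^{-2}$), one arrives at a Caccioppoli inequality of the form
\begin{equation*}
\mathrm{LHS}\ \leqslant\ c\int(v-k)_+^2\bigl[|D\xi|^2+\xi|\partial_t\xi|\bigr]dz\ +\ c\bigl(\|\vec g\|_\infty^2 + R^2\|g^0\|_\infty^2\bigr)\bigl|\{v>k\}\cap\mathrm{supp}\,\xi\bigr|,
\end{equation*}
valid for all $k\geq0$. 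This is precisely the class $\mathfrak B_2$-type inequality handled by the De Giorgi machinery in \cite{LSU67} (and by \cite{Lie96}, Theorem~6.17); the iteration carries the $|\{v>k\}|$-term along, and the weights $R\|\vec g\|_\infty$ and $R^2\|g^0\|_\infty$ emerge in the final $\sup$-bound from dimensional bookkeeping inside the iteration rather than from the choice of $k_0$. To salvage your route via Fact~\ref{fact-4}, you would first need to strengthen that Fact to allow such an additive level-set term on the right-hand side; as stated it does not, so the reduction has a genuine gap.
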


\begin{proof}
The validity of Fact~\ref{fact-for-estimate-Du} follows from results of \S 6  Chapter II and \S 8 Chapter III \cite{LSU67} (see also Theorem~6.17 in \cite{Lie96}).
\end{proof}

We denote
$$
I(r,v,z^{*})=\int\limits_{t^*-r^2}^{t^*}\int\limits_{\mathbb{R}^n}|Dv(x,t)|^2G(x-x^*,t^*-t)dxdt,
$$
where $r \in ]0,\rho_0]$, $z^*=(x^*,t^*)$ is a point in $\mathbb{R}^{n+1}$, a function $v$ is defined n the strip $\mathbb{R}^n \times [t^*-\rho_0^2, t^*]$, and the heat kernel $G(x,t)$  is defined by
$$
G(x,t)=\frac{\exp{(-|x|^2/4t)}}{(4\pi t)^{n/2}} \ \text{for}\ t>0 \
\text{and}\ G(x,t)=0 \ \text{for}\ t \leqslant 0.
$$

To prove the quadratic growth estimate for solutions of (\ref{main-equation}), we need the following local version of the famous Caffarelli monotonicity formula (see \cite{CSa05}) for pairs of disjointly supported subsolutions of the heat equation.


\begin{fact} \label{ACF}
Let $z^*=(x^*,t^*)$ be a point in $\mathbb{R}^{n+1}$, let $\xi_{\rho_0,x^*}:=\xi_{\rho_0,x^*} (x)$ be a standard time-independent cut-off function belonging   $C^2(\overline{B}_{\rho_0}(x^*))$, having support in $B_{\rho_0}(x^*)$, and satisfying $\xi_{\rho_0,x^*} \equiv 1$ in $B_{\rho_0/2}(x^*)$, and  
let $\theta_1$, $\theta_2$ be nonnegative, sub-caloric and continuous functions in $Q_{\rho_0}^-(z^*)$, satisfying
$$
\theta_1(x^*,t^*)=\theta_2(x^*,t^*)=0, \qquad \theta_1(x,t)\cdot \theta_2(x,t)=0 \quad \text{in}\quad Q_{\rho_0}^-(z^*).
$$

Then, for $0<r<\rho_0$  the functional
$$
\begin{aligned}
\Phi(r, \xi_{\rho_0, z^*}):=\Phi (r, \theta_1, \theta_2, \xi_{\rho_0, z^*}, z^* )=&\frac{1}{r^4} I(r, \theta_1\xi_{\rho_0, z^*} , z^*) I(r, \theta_2\xi_{\rho_0, z^*}, z^*)
\end{aligned}
$$
satisfies the inequality
\begin{equation} \label{local-monotonicity-formula}
\Phi (r,\xi_{\rho_0, z^*}) \leqslant 
\frac{\tilde{N}}{\rho_0^{2n+8}} \|\theta_1\|^2_{2, Q_{\rho_0}^-(z^*)} \|\theta_2\|^2_{2, Q_{\rho_0}^-(z^*)} 
\end{equation}
with an absolute positive constant $\tilde{N}$.
\end{fact}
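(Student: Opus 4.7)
The plan is to adapt the classical parabolic Caffarelli monotonicity formula (in the form presented in Caffarelli--Salsa) for disjointly-supported, non-negative, continuous, sub-caloric functions vanishing at the vertex to the present situation where the functions are spatially truncated by the cutoff $\xi := \xi_{\rho_0, x^*}$. After translating so that $z^* = 0$, I set $w_i := \theta_i \xi$: these are nonnegative, continuous, disjointly supported, vanish at $(0,0)$, and are compactly supported in $B_{\rho_0}(0)$. The main complication is that $w_i$ is no longer sub-caloric; rather,
$$H[w_i] = \xi H[\theta_i] + 2 D\xi \cdot D\theta_i + \theta_i \Delta\xi,$$
where $\xi H[\theta_i] \geq 0$ while the error $E_i := 2 D\xi \cdot D\theta_i + \theta_i \Delta\xi$ is supported in the annular region $B_{\rho_0}(0) \setminus B_{\rho_0/2}(0)$.

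The next step is to differentiate $r \mapsto \log \Phi(r, \xi)$. The classical argument expands $|Dw_i|^2 G$ on spherical time-slices and applies a Friedland--Hayman type spectral inequality to yield
$$\frac{d}{dr} \log \Phi(r, \xi) \geq -\varepsilon(r),$$
where $\varepsilon(r)$ collects the contributions of $E_i$ integrated against $G$. On $\mathrm{supp}\, E_i$ one has $|x| \geq \rho_0/2$, so for $s \in (0, \rho_0^2)$,
$$G(x, s) \leq (4\pi s)^{-n/2} \exp\!\left(-\frac{\rho_0^2}{16 s}\right),$$
which decays super-polynomially in $\rho_0/s^{1/2}$. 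After bounding $|D\theta_i|$ within $E_i$ by a standard sub-caloric Caccioppoli estimate, the integral $\int_0^{\rho_0} \varepsilon(s)\,ds$ is finite and controlled by $C\rho_0^{-(2n+8)} \|\theta_1\|_{2, Q_{\rho_0}^-(z^*)}^2 \|\theta_2\|_{2, Q_{\rho_0}^-(z^*)}^2$.

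Integrating the differential inequality from $r$ up to $\rho_0$ yields
$$\Phi(r, \xi) \leq \Phi(\rho_0, \xi) + C \rho_0^{-(2n+8)} \|\theta_1\|_2^2 \|\theta_2\|_2^2,$$
so it remains to bound $\Phi(\rho_0, \xi)$ directly. Splitting $|D(\theta_i \xi)|^2 \leq 2\xi^2 |D\theta_i|^2 + 2\theta_i^2 |D\xi|^2$, the second piece is again Gaussian-suppressed by the annulus argument, while the first is controlled by a Gaussian-weighted Caccioppoli inequality for sub-caloric $\theta_i$ on $Q_{\rho_0}^-(0)$. This gives $I(\rho_0, \theta_i \xi, 0) \leq C \rho_0^{-(n+2)} \|\theta_i\|_{2, Q_{\rho_0}^-(z^*)}^2$ and hence $\Phi(\rho_0, \xi) \leq C \rho_0^{-(2n+8)} \|\theta_1\|_2^2 \|\theta_2\|_2^2$, completing the proof. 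The main obstacle is the careful bookkeeping in the cutoff-corrected monotonicity computation: every term generated by derivatives of $\xi$ must be shown either to vanish thanks to $G$'s exponential decay outside $B_{\rho_0/2}$, or to be absorbed by a Caccioppoli estimate with the precise $\rho_0$-scaling $\rho_0^{-(2n+8)}$ appearing in the target inequality.
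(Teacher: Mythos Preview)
Your outline matches the paper's approach closely: the paper quotes \cite{ASU00} for the almost-monotonicity inequality
\[
\Phi(r,\xi_{\rho_0,z^*}) \leqslant \Phi(\rho_0/2,\xi_{\rho_0,z^*}) + \frac{N'}{\rho_0^{2n+8}}\|\theta_1\|_{2,Q_{\rho_0}^-(z^*)}^2\|\theta_2\|_{2,Q_{\rho_0}^-(z^*)}^2,
\]
which is your integrated differential inequality, and then bounds the endpoint $\Phi(\rho_0/2,\xi_{\rho_0,z^*})$ by exactly the Gaussian-weighted Caccioppoli argument you describe (using $|D\theta_i|^2 \leqslant \tfrac12 H[\theta_i^2]$, integrating by parts against $\zeta_0^2 G$, and exploiting that all cutoff-derivative terms live where $|x-x^*|\geqslant \rho_0/2$ or $t^*-t\geqslant \rho_0^2/4$, so $G$ and $DG$ are bounded by $c\rho_0^{-n}$ and $c\rho_0^{-n-2}$). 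The paper additionally inserts a \emph{time} cutoff $\varsigma_{\rho_0,z^*}(t)$ at this stage to kill the boundary term at $t=t^*-\rho_0^2$; you should do the same.

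One genuine slip: from $\frac{d}{dr}\log\Phi \geqslant -\varepsilon(r)$ you only get the \emph{multiplicative} bound $\Phi(r)\leqslant e^{\int\varepsilon}\Phi(\rho_0)$, not the additive one you wrote, and in that form $\varepsilon(r)$ contains $I(r,w_i)$ in the denominator, which you cannot control from below. The correct bookkeeping (as in \cite{ASU00}) is to differentiate $\Phi$ itself: the cutoff error coming from factor $i$ then appears multiplied by $r^{-4}I(r,w_j)$ for $j\neq i$, and since $I(r,w_j)$ is nondecreasing in $r$ you bound it once and for all by your endpoint estimate $I(\rho_0,w_j)\leqslant C\rho_0^{-n-2}\|\theta_j\|_2^2$. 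This cross-term structure is exactly what produces the product $\|\theta_1\|_2^2\|\theta_2\|_2^2$ in the additive error after integration.
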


\begin{proof}
Using the same arguments as in the proof of Lemma~2.4 and Remark after that in \cite{ASU00} (see also Fact~1.6 and Remark~1.7 in  \cite{AU13}) one can get the inequality
\begin{equation} \label{monotonicity-from-ASU}
\Phi (r,\xi_{\rho_0, z^*}) \leqslant \Phi (\rho_0/2,\xi_{\rho_0, z^*})
+ \frac{N'}{\rho_0^{2n+8}} \|\theta_1\|^2_{2, Q_{\rho_0}^-(z^*)} \|\theta_2\|^2_{2, Q_{\rho_0}^-(z^*)}, 
\end{equation}
where $N'$ is an absolute positive constant.

We claim that the first term on the right-hand side of (\ref{monotonicity-from-ASU}) can be estimated via the second term. Indeed, it is evident that
\begin{equation} \label{25a}
\Phi (\rho_0/2,\xi_{\rho_0, z^*}) \leqslant \frac{c}{\rho_0^4}I(\rho_0, \theta_1\zeta_0, z^*)I(\rho_0, \theta_2\zeta_0, z^*),
\end{equation}
where $\zeta_0=\zeta_0(x,t)=\xi_{\rho_0,z^*}(x)\varsigma_{\rho_0,z^*}(t)$, while $\varsigma_{\rho_0,z^*}$ stands for a nonnegative  function belonging $C^2\left( [t^*-\rho_0^2, t^*]\right) $, having support in $[t^*-3\rho_0^2/4, t^*]$ and satisfiying
$\varsigma_{\rho_0,z^*} (t) \equiv 1$ in $[t^*-\rho_0^2/4, t^*]$.

On the other hand,
functions $\theta_i$, $i=1,2$, are sub-caloric in $Q_{\rho_0}^-(z^*)$, i.e., $H[\theta_i] \geqslant 0$ in the sense of distributions. Since 
$$|D\theta_i|^2+\theta_i H[\theta_i]=\frac{1}{2}H[\theta_i^2]$$ we have
\begin{equation}\label{25-AU13}
\begin{aligned}
\int\limits_{t^*-\rho_0^2}^{t^*}\int\limits_{\mathbb{R}^n}&|D\theta_i(x,t)|^2\zeta_{0}^2(x,t)G(x-x^*,t^*-t)dxdt\\
&\leqslant \frac{1}{2}\int\limits_{t^*-r_0^2}^{t^*}\int\limits_{\mathbb{R}^n}H[\theta_i^2(x,t)]\zeta_{0}^2(x,t)G(x-x^*,t^*-t)dxdt.
\end{aligned}
\end{equation}
After successive integration the right-hand side of (\ref{25-AU13}) by parts we get
\begin{align*}
\int\limits_{t^*-\rho_0^2}^{t^*}\int\limits_{\mathbb{R}^n}|D\theta_i|^2\zeta_{0}^2Gdxdt &=
\int\limits_{t^*-\rho_0^2}^{t^*}\int\limits_{B_{\rho_0}(x^*)}|D\theta_i|^2\zeta_{0}^2Gdxdt\\
& \leqslant -\int\limits_{B_{\rho_0}(x^*)}\left( \frac{\theta_i^2}{2}\zeta_{0}^2G\right) dx\bigg|^{t^*}_{t^*-\rho_0^2/4}\\
&+\int\limits_{t^*-\rho_0^2}^{t^*}\int\limits_{B_{\rho_0}(x^*)}\frac{\theta_i^2}{2}\zeta_{0}^2\left[ \partial_tG+\Delta G\right] dxdt\\
& +\int\limits_{t^*-\rho_0^2}^{t^*}\int\limits_{B_{\rho_0}(x^*)}\theta_i^2\left[ 2\zeta_{0}D\zeta_{0}DG+
G|D\zeta_{0}|^2+G\zeta_{0}\Delta \zeta_{0}\right] dxdt\\
&+\int\limits_{t^*-\rho_0^2}^{t^*}\int\limits_{B_{\rho_0}(x^*)}\theta_i^2G \zeta_0 |\partial_t \zeta_0|dxdt
\\
& =:J_1+J_2+J_3+J_4.
\end{align*}
It is evident that due to our choice of $\zeta_0$ we have $J_1 \leqslant 0$. 

Further, taking into account the relation
$$
\partial_t G+\Delta G=\partial_t G(x-x^*, t^*-t)+\Delta G(x-x^*,t^*-t)=0 \quad \text{for}\quad t<t^*,
$$
we conclude that $J_2=0$.

Finally, we observe that the integral in $J_3$ is really taken over the set $\mathcal{E}=]t^*-\rho_0^2,t^*]\times \left\lbrace B_{\rho_0}(x^*)\setminus B_{\rho_0/2}(x^*)\right\rbrace $, while the integral in $J_4$ is taken over the set $\mathcal{E}'=[t^*-\rho_0^2, t^*-\rho_0^2/4] \times B_{\rho_0}(x^*)$. Therefore, in $\mathcal{E}$ we have the following estimates for functions involved into $J_3$
\begin{align*}
|G(x-x^*,t^*-t)|&\leqslant \hat{c}\frac{e^{-\frac{\rho_0^2}{16(\rho_0^2-t)}}}{(\rho_0^2-t)^{n/2}}\leqslant \hat{c}\rho_0^{-n};\\
|DG(x-x^*,t^*-t)D\zeta_0(x,t)|&\leqslant \hat{c}|G(x-x^*,t^*-t)|\frac{|x-x^*|}{\rho_0\left( \rho_0^2-t\right) }\\
& \leqslant \hat{c} \frac{e^{-\frac{\rho_0^2}{16(\rho_0^2-t)}}}{(\rho_0^2-t)^{1+n/2}}\leqslant \hat{c}\rho_0^{-n-2}.
\end{align*}
Similarly, in $\mathcal{E}'$ we have 
$$
|G(x-x^*,t^*-t)| \leqslant \hat{c}\rho_0^{-n}, 
$$
and, consequently,
$$
J_3+J_4 \leqslant \tilde{c} \rho_0^{-n-2}\iint\limits_{Q_{\rho_0}^-(z^*)}\theta_i^2
dxdt \leqslant \tilde{c} \rho_0^{-n-2}\|\theta_i\|_{2, Q_{\rho_0}^-(z^*)}^2.$$

Thus, collecting all inequalities we get
\begin{equation} \label{26-AU13}
\begin{aligned}
I(\rho_0, \theta_i\zeta_0, z^*)&\leqslant 2\int\limits_{t^*-\rho_0^2}^{t^*} \int\limits_{B_{\rho_0}(x^*)}\left[ |D\zeta_0|^2\theta_i^2+|D\theta_i|^2 \zeta_0^2\right] Gdxdt \\
& \leqslant N''\rho_0^{-n-2}\|\theta_i\|_{2, Q_{\rho_0}^-(z^*)}^2,
\end{aligned}
\end{equation}
where $N''$ denotes a positive absolute constant.

Now, combination of (\ref{monotonicity-from-ASU}), (\ref{25a}) and (\ref{26-AU13}) finishes the proof of (\ref{local-monotonicity-formula}).
\end{proof}

\vspace{0.2cm}
\begin{fact}\label{lemma4.2-Ura2007}
Let a continuous function $v$ in the cylinder $Q_R^-(z^0)$ satisfies the following conditions:
\begin{align*}
&v(z^0)=0;\qquad \qquad \qquad \qquad \qquad \qquad \qquad \qquad \qquad \qquad \qquad \qquad  \\
&v\ \text{is differentiable at}\ z^0;\\
&v_{\pm}\ \text{are subcaloric in}\ Q_R^-(z^0).
\end{align*}
Then
$$
|Dv(z^0)|\leqslant \tilde{N}'\sqrt{R^{-2}\fint\limits_{Q^{-}_R(z^0)}v^2dxdt}.
$$
\end{fact}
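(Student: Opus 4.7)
The plan is to apply the monotonicity formula of Fact~\ref{ACF} to $\theta_1:=v_+$ and $\theta_2:=v_-$ (with $\rho_0=R$) and extract the pointwise bound on $|Dv(z^0)|$ by passing to the limit $r\to 0$ in the functional $\Phi(r)$, using the differentiability of $v$ at $z^0$.

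I would begin by setting $p:=Dv(z^0)$ and introducing the parabolic rescaling $v_r(y,s):=r^{-1}v(x^0+ry,\,t^0+r^2 s)$ on $Q^-_{R/r}(0,0)$. Since $v(z^0)=0$ and $v$ is differentiable at $z^0$, $v_r\to\ell$ uniformly on $Q^-_1(0,0)$, where $\ell(y):=p\cdot y$. A direct change of variables gives $\Phi(r,v_+,v_-,\xi_{R,z^0},z^0)=I(1,(v_r)_+\xi_r,0)\cdot I(1,(v_r)_-\xi_r,0)$, with the rescaled cut-off $\xi_r$ equal to $1$ on $B_1$ once $r<R/2$. Combined with the upper bound $\Phi(r)\le \tilde{N} R^{-(2n+8)}\|v\|^4_{2,Q^-_R(z^0)}$ from Fact~\ref{ACF}, the problem reduces to proving the lower bound $\liminf_{r\to 0}\Phi(r)\ge c_n|p|^4$ for some dimensional constant $c_n>0$.

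To produce this lower bound I would exploit the sub-caloricity of $(v_r)_\pm$: uniform convergence gives a uniform $L^\infty$-bound on $(v_r)_\pm$ in $Q^-_1$, and the standard Caccioppoli inequality for sub-caloric functions yields a uniform $L^2$-bound on $D(v_r)_\pm$ in $Q^-_{1/2}$. Combined with the strong $L^2$-convergence $(v_r)_\pm\to\ell_\pm$ (a consequence of uniform convergence), this forces the weak convergence $D(v_r)_\pm\rightharpoonup D\ell_\pm$ in $L^2(Q^-_{1/2})$. Lower semicontinuity of $w\mapsto\int|Dw|^2 G(y,-s)\,dy\,ds$ --- handled by restricting first to $\{s<-\varepsilon\}$ (where $G$ is bounded and thus a multiplier on $L^2$) and then letting $\varepsilon\to 0$ via monotone convergence --- gives
$$\liminf_{r\to 0}\,I(1,(v_r)_\pm\xi_r,0)\;\ge\;\int_{Q^-_{1/2}}|D\ell_\pm|^2\,G(y,-s)\,dy\,ds\;=\;\kappa_n|p|^2,$$
the last equality following from $|D\ell_\pm|^2=|p|^2\mathbf{1}_{\{\pm p\cdot y>0\}}$ and the symmetry $\int_{\{\pm p\cdot y>0\}}G\,dy=\tfrac12\int G\,dy$. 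Consequently $\liminf_{r\to 0}\Phi(r)\ge\kappa_n^2|p|^4$, so $|p|^4\lesssim R^{-(2n+8)}\|v\|^4_{2,Q^-_R(z^0)}$; writing $\|v\|^2_{2,Q^-_R(z^0)}\asymp R^{n+2}\fint_{Q^-_R(z^0)}v^2\,dz$ and taking fourth roots delivers the stated bound.

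The hard part is precisely this lower bound: pointwise differentiability at $z^0$ only controls function values, not gradients, after rescaling, and the weight $G(y,-s)$ is singular as $s\to 0^-$. Sub-caloricity is the decisive structural input --- it supplies the Caccioppoli estimate that converts uniform convergence of values into weak $W^{1,0}_2$-convergence of gradients, after which the standard truncation $\{s<-\varepsilon\}$ together with lower semicontinuity and monotone convergence closes the argument.
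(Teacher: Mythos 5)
Your proof is correct and fills in precisely the details behind the paper's one-line assertion that the estimate ``follows directly from Fact~\ref{ACF}.'' Applying that monotonicity formula with $\theta_1=v_+$, $\theta_2=v_-$, passing to the parabolic rescaling $v_r$, and extracting the lower bound on $\liminf_{r\to 0}\Phi(r)$ in terms of $|Dv(z^0)|^4$ via the Caccioppoli estimate for nonnegative subcaloric functions and weak lower semicontinuity of the weighted Dirichlet integral is exactly the standard argument the authors are invoking.
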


\begin{proof}
The above inequality follows directly from Fact~\ref{ACF}.
\end{proof}

\section*{Acknowledgement}

The authors would like to express the sincerest gratitude to P.~Gurevich and S.~Tikhomirov for drawing our attention to hysteresis-type problems. Both authors also thank the Isaac Newton Institute for Mathematical Sciences, Cambridge, UK, where this work was done during the program \textit{Free Boundary Problems and Related Topics}.

\bibliography{Bibliography(HysP)}

\Addresses
\end{document}